\documentclass[10pt]{article} \pagestyle{plain}
\usepackage{a4,amssymb, amsmath, amsthm}
\usepackage{graphics,color}
\usepackage{epsfig}
\usepackage{subfigure}
\usepackage{caption}
\usepackage{ulem}
\usepackage{multirow}
\usepackage{verbatim}

\newtheorem{theorem}{Theorem}[section]
\newtheorem{corollary}[theorem]{Corollary}
\newtheorem{lemma}[theorem]{Lemma}
\newtheorem{proposition}[theorem]{Proposition}

\theoremstyle{definition}
\newtheorem{definition}[theorem]{Definition}

\newcommand*{\bbar}[1]{\bar{\bar{#1}}}

\addtolength{\topmargin}{-.5in}
\addtolength{\textheight}{0.4in}
	\addtolength{\textwidth}{0.8in}

\begin{document}
\normalem
\providecommand{\keywords}[1]{{\noindent \textbf{Keywords:}} #1}
\providecommand{\msc}[1]{{\noindent \textit{Mathematics Subject Classification:}} #1}

\title{A space-time adaptive boundary element method \\for the wave equation}
\author{*A. Aimi, *G. Di Credico, $^\S$H. Gimperlein, *C. Guardasoni\\\\\small *Dept.~of Mathematical Physical and Computer Sciences, University of Parma, Italy\\\small *Members of the INdAM-GNCS Research Group, Italy\\\small$^\S$Engineering Mathematics, University of Innsbruck, Austria}
\date{}
\maketitle \vskip 0.5cm
\begin{abstract}
\noindent This article initiates the study of space-time adaptive mesh refinements for time-dependent boundary element formulations of wave equations. Based on error indicators of residual type, we formulate an adaptive boundary element procedure for acoustic soft-scattering problems with local tensor-product refinements of the space-time mesh. We discuss the algorithmic challenges and investigate the proposed method in numerical experiments. In particular, we study the performance and improved convergence rates with respect to the energy norm for problems dominated by spatial, temporal or traveling singularities of the solution. The efficiency of the considered rigorous and heuristic a posteriori error indicators is discussed.
\end{abstract}

\section{Introduction}

For time-independent problems with singular solutions, adaptive mesh refinements give rise to efficient versions of both finite element and boundary element methods \cite{bonito,gwinsteph}, with improved or optimal convergence rates.  Correspondingly, for time-dependent problems space- or time-adaptive boundary element methods have attracted much recent interest \cite{gantner,review,adaptive,hoonhout2023,zank1d}. However, meshes which are locally refined in both space and time are crucial to resolve space-time singularities such as traveling wave fronts or singularities in nonlinear problems \cite{contact}. Partly due to the algorithmic and analytic challenges, such fully space-time adaptive methods have hardly been explored for hyperbolic problems \cite{Glaefke}. 
 
In this article we initiate the study of fully space-time adaptive mesh refinement procedure for the wave equation, formulated as a boundary integral equation in the time-domain \cite{costabel04, sa}. Based on a posteriori error estimates of residual type \cite{adaptive}, the proposed adaptive mesh refinements follows the four steps:
 \begin{align*}
  \textbf{SOLVE}&\longrightarrow \textbf{ESTIMATE}\longrightarrow \textbf{MARK}\longrightarrow \textbf{REFINE}.
\end{align*}


We here present this fully space-time  adaptive method, discuss the involved algorithmic challenges and investigate its properties in numerical experiments.

To describe the main results, we consider the following model problem for the acoustic wave equation 


\begin{equation}\label{eq_wave}
\partial_t^2 u - \Delta u = 0\ , \quad u=0\ \text{ for }\ t\leq 0\ ,    
\end{equation}


\noindent in the exterior outside an (open or closed) curve $\Gamma\subset\mathbb{R}^2$. The soft-scattering problem imposes inhomogeneous Dirichlet boundary conditions 


\begin{equation}\label{Dir_cond}
u  = f 
\end{equation}


\noindent on the obstacle $\Gamma$.\\
Following \cite{bh}, the problem \eqref{eq_wave}, \eqref{Dir_cond} is equivalent to a time dependent weakly singular integral equation for an unknown density $\psi$ on $\Gamma$
\begin{align}\label{weaklysingeq}
\mathcal{V} \psi(t,x) = \iint_{\mathbb{R}^+\times \Gamma} G(t- \tau,x,y)\ \psi(\tau,y)\ d\gamma_y \ d\tau= f(t,x)\,,
\end{align}
which involves the fundamental solution 
\begin{align}\label{eq:green}
 G(t-\tau,x,y)&= \frac{H(t-\tau-|x-y|)}{2\pi \sqrt{(t-\tau)^2-|x-y|^2}}
\end{align}


\noindent of the wave equation in $\mathbb{R}^2$.

Based on a weak formulation of \eqref{weaklysingeq} related to the energy \cite{ADGMP2009}, we consider Galerkin discretizations using tensor products of piecewise polynomials in space and time in each space-time element.

Error indicators based on available residual a posteriori error estimates \cite{adaptive} are used to introduce the space-time adaptive algorithm in Subsection \ref{subsec:stadaptive}. We address the algorithmic challenges compared to previously studied adaptive methods in space or time separately, which, in particular, exploited the global tensor product structure of the mesh. The proposed adaptive algorithm is studied in numerical experiments. They illustrate the improved convergence rates with respect to the energy norm for problems dominated by spatial, temporal or traveling singularities, with reductions in the required number of degrees of freedom and memory. The experiments indicate the efficiency and reliability of the error estimates in appropriate space-time norms.

The current work contributes to the recent interest in space-time adaptive boundary element methods for the wave equation.
For the Dirichlet problem considered in this article, a posteriori error estimates were studied in \cite{adaptive}. However, they were only used for space-adaptive mesh refinements with a uniform time step, while the challenges of fully space-time adaptive refinements were described. Gl\"{a}fke \cite{Glaefke} obtained first computational results towards space-time refinements in $\mathbb{R}^2$. Unpublished work by Abboud uses error {estimators of Zienkiewicz-Zhu type \cite{zz}, as often used in computational engineering,} towards space-adaptive mesh refinements for screen problems in $\mathbb{R}^3$. {For the  Neumann problem, space-adaptive mesh refinements were recently considered in \cite{aimi2024W}.}

The literature on adaptive time discretizations to resolve singular temporal behavior is more limited. For the soft scattering problem in $\mathbb{R}$ the adaptive selection of time steps was recently studied in \cite{hoonhout2023,zank2020inf,zank1d}, following earlier work \cite{sv} in $\mathbb{R}^3$
. For time discretizations using convolution quadrature non-uniform time steps have been of much interest. {For wave equations, references \cite{ls,lf2015} provided a framework for numerically evaluating the convolution integral with the possibility to accommodate adaptive time stepping. We refer to \cite{cicci, menon} for first works on adaptive time stepping based on generalized convolution quadrature to solve boundary integral formulations of the time-dependent wave equation.}

Beyond adaptive methods, both Galerkin and convolution quadrature methods have attracted much interest for wave equations. Such time domain methods are of particular relevance for problems which cannot be reduced to the frequency domain, including nonlinear problems and problems that involve a broad range of frequencies. We refer to   \cite{banjai2022integral,costabel04,hd,review} for an overview. Singular solutions have been particularly studied in the case of time-independent geometric singularities, where quasi-optimal convergence rates have been shown for time-independent graded meshes in space or using $hp$-versions on quasi-uniform meshes \cite{aimi23,hp,graded}. Relevant to the current article are works on the efficient assembly and compression of the space-time matrices for both time-stepping and more general space-time discretizations \cite{aimi2020,desideriofalletta,hsiao1,polzschanz,polzschanz2,bertoluzza,merta}, also beyond the classical global tensor product meshes.\\

\noindent \emph{Structure of this article:} Section \ref{secproblem} introduces the weak formulation of integral equation \eqref{weaklysingeq} and its abstract Galerkin discretization. The considered space-time discretizations using local tensor products of piecewise polynomials in space and time are described in Section \ref{Discretization} 
together with the a posteriori error estimate which is used to define  {(so-called theoretical)} error indicators. Subsequently, algorithmic details are presented in Section \ref{algo}. 
Numerical results are collected in Section \ref{Numerical results}. They assess the performance and convergence properties of the proposed fully space-time adaptive procedure and the efficiency of the {theoretical} error indicators, {which are further compared with an alternative, heuristic error indicator.}\\

{\noindent\emph{Notation:} We write $f \lesssim g$ provided there exists a constant $C>0$ such that $f \leq Cg$. If the constant $C$ is allowed to depend on a parameter $\sigma$, we write $f \lesssim_\sigma g$.}

\section{Weak formulation of the model problem}\label{secproblem}

We assume that $\Gamma$ is the boundary of a polygonal Lipschitz domain or an open polygonal Lipschitz curve. The weak formulation of equation \eqref{weaklysingeq} involves the bilinear form 
\begin{equation}
    B(\psi, \phi) = \iint_{\mathbb{R}^+ \times \Gamma} \partial_t\mathcal{V}  {\psi}(t,x)\ \phi(t,x)\ d\gamma_x \, d_\sigma t, 
\end{equation}
where $d_\sigma t = \mathrm{e}^{-2\sigma t} dt$ for fixed $\sigma>0$. \\

\noindent {\bf Remark.} The numerical simulations are typically related to a bounded time interval of analysis, where we set $\sigma=0$ as usual \cite{ADGMP2009,bh}. \\

{For the analysis, space-time anisotropic Sobolev spaces provide a convenient framework of function spaces and related norms, going back to \cite{bh,hd}. Closely related function spaces were recently also used for the a posteriori analysis of finite element discretizations, see \cite{cf23, chaumont2024damped}. In particular, the Sobolev space $H^r_\sigma(\mathbb{R}^+,\widetilde{H}^s({\Gamma}))$ (essentially) consists of those distributions supported in $\mathbb{R}^+\times \Gamma$ such that $s$ spatial derivatives and $s+r$ time derivatives belong to $L^2(\mathbb{R}^+\times \Gamma, d_\sigma t\, d\gamma_x)$. The corresponding Sobolev norm is denoted by $\|\cdot\|_{r,s,\Gamma,\ast} $.  We refer to \cite{aimi23}, Appendix A, for precise definitions.}\\

The properties of $B(\cdot,\cdot)$ follow from the properties collected in
\begin{theorem}\label{mapthm} Let $r \in \mathbb{R}$. 
\begin{description} 
  \item[a)] Then the weakly singular operator is continuous,
\begin{equation} \label{mapping}
\mathcal{V} : H_\sigma^{r+1}(\mathbb{R}^+, \widetilde{H}^{-\frac{1}{2}}(\Gamma)) \to H_\sigma^{r}(\mathbb{R}^+, H^{\frac{1}{2}}(\Gamma))\ . \end{equation}
  \item[b)] The operator $ \partial_t \mathcal{V}$ is weakly coercive: \begin{equation}
    \iint_{\mathbb{R}^+ \times \Gamma} \partial_t(\mathcal{V} \psi(t,x)) \psi(t,x)\  d\gamma_x \ d_\sigma t \gtrsim_\sigma \|\psi\|_{0,-{\frac{1}{2}},\Gamma,\ast}^2,
\end{equation}
\noindent and the inverse $\mathcal{V}^{-1}$ is continuous, \begin{equation} \label{mapping_inverse}
\mathcal{V}^{-1} : H_\sigma^{r+1}(\mathbb{R}^+, H^{\frac{1}{2}}(\Gamma)) \to H_\sigma^{r}(\mathbb{R}^+, \widetilde{H}^{-\frac{1}{2}}(\Gamma))\ .\end{equation}
\end{description}
\end{theorem}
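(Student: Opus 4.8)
The natural approach is the Laplace transform in time, following the energy method of Bamberger--Ha Duong \cite{bh,hd}: it turns the retarded operator $\mathcal{V}$ into a holomorphic family of modified-Helmholtz single-layer operators $V_s$ on $\Gamma$, parametrised by the complex frequency $s=\sigma+i\eta$, $\eta\in\mathbb{R}$, and all three assertions follow by combining frequency-explicit bounds for this family with Plancherel's theorem. For a distribution $\psi$ supported in $\mathbb{R}^+\times\Gamma$ one has $\widehat{\mathcal{V}\psi}(s,\cdot)=V_s\,\widehat\psi(s,\cdot)$, where $V_s$ has kernel $\frac{1}{2\pi}K_0(s|x-y|)$, the fundamental solution of $-\Delta+s^2$ in $\mathbb{R}^2$; moreover, up to equivalence, $\|\psi\|_{r,s,\Gamma,\ast}^2=\int_{\mathbb{R}}|s|^{2r}\,\|\widehat\psi(\sigma+i\eta,\cdot)\|_{m,|s|,\Gamma}^2\,d\eta$, with $\|\cdot\|_{m,|s|,\Gamma}$ the $|s|$-scaled Sobolev norm on $\Gamma$ equivalent to $\big(\|\cdot\|_{H^m(\Gamma)}^2+|s|^{2m}\|\cdot\|_{L^2(\Gamma)}^2\big)^{1/2}$ (cf.\ \cite{aimi23}, Appendix A). Thus \eqref{mapping} and \eqref{mapping_inverse} reduce to bounds for $V_s$ and $V_s^{-1}$ between $|s|$-scaled spaces on $\Gamma$ that grow like a single power of $|s|$, with $r\in\mathbb{R}$ entering only through the outer weight $|s|^{2r}$; a density argument reduces to smooth $\psi$.

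The engine is the single-layer potential $U=\mathcal{S}_s\lambda$, which solves $\Delta U=s^2U$ in $\mathbb{R}^2\setminus\Gamma$, decays at infinity since $\mathrm{Re}\,s=\sigma>0$, is continuous across $\Gamma$, has normal-derivative jump $[\partial_n U]=-\lambda$, and satisfies $V_s\lambda=\gamma U$. Green's identity relates $\langle V_s\lambda,\lambda\rangle_\Gamma$ to the energy of $U$,
\begin{equation}\label{eq:energyid}
 \langle V_s\lambda,\lambda\rangle_\Gamma=\int_{\mathbb{R}^2\setminus\Gamma}\big(|\nabla U|^2+s^2\,|U|^2\big)\,dx\,,
\end{equation}
understood through the $\widetilde H^{-\frac{1}{2}}(\Gamma)$--$H^{\frac{1}{2}}(\Gamma)$ duality (suppressing conjugation conventions; for an open arc the jump relations and \eqref{eq:energyid} persist with $\lambda\in\widetilde H^{-\frac{1}{2}}(\Gamma)$). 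Equivalently $U$ solves the variational problem $\int_{\mathbb{R}^2\setminus\Gamma}(\nabla U\cdot\overline{\nabla\Phi}+s^2U\bar\Phi)\,dx=\langle\lambda,\gamma\Phi\rangle_\Gamma$ for all $\Phi\in H^1(\mathbb{R}^2\setminus\Gamma)$, which is coercive: multiplying by $\bar s$ and using $\mathrm{Re}(\bar s)=\sigma$ and $\mathrm{Re}(\bar s\,s^2)=\sigma|s|^2$ gives
\begin{equation}\label{eq:coerc}
 \mathrm{Re}\Big(\bar s\!\int_{\mathbb{R}^2\setminus\Gamma}\!\big(|\nabla U|^2+s^2|U|^2\big)\,dx\Big)=\sigma\big(\|\nabla U\|_{L^2}^2+|s|^2\|U\|_{L^2}^2\big)=:\sigma\,E(U)\,.
\end{equation}

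Part a) follows by testing with $\Phi=U$ and using $|\langle\lambda,\gamma U\rangle_\Gamma|\le\|\lambda\|_{-\frac{1}{2},|s|,\Gamma}\|\gamma U\|_{\frac{1}{2},|s|,\Gamma}$ together with the $|s|$-scaled trace theorem $\|\gamma U\|_{\frac{1}{2},|s|,\Gamma}\lesssim E(U)^{1/2}$; with \eqref{eq:coerc} this yields $E(U)\lesssim_\sigma |s|^2\|\lambda\|_{-\frac{1}{2},|s|,\Gamma}^2$, hence $\|V_s\lambda\|_{\frac{1}{2},|s|,\Gamma}=\|\gamma U\|_{\frac{1}{2},|s|,\Gamma}\lesssim_\sigma |s|\,\|\lambda\|_{-\frac{1}{2},|s|,\Gamma}$, and squaring, multiplying by $|s|^{2r}$ and integrating in $\eta$ gives \eqref{mapping}. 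For the weak coercivity in b), Plancherel for the weight $d_\sigma t$ and $\partial_t\leftrightarrow s$ give $\iint\partial_t(\mathcal{V}\psi)\psi\,d\gamma_x\,d_\sigma t=\frac{1}{2\pi}\int_{\mathbb{R}}\langle\bar s\,V_s\widehat\psi(s),\widehat\psi(s)\rangle_\Gamma\,d\eta$ (up to conjugation conventions); taking real parts and inserting \eqref{eq:energyid}--\eqref{eq:coerc} bounds this below by $\frac{\sigma}{2\pi}\int_{\mathbb{R}}E(\mathcal{S}_s\widehat\psi(s))\,d\eta$, and since $\widehat\psi(s)=-[\partial_n\mathcal{S}_s\widehat\psi(s)]$ satisfies $\|\widehat\psi(s)\|_{-\frac{1}{2},|s|,\Gamma}\lesssim E(\mathcal{S}_s\widehat\psi(s))^{1/2}$ (by the $H^{-\frac{1}{2}}$ bound for the normal-derivative jump and the $|s|$-scaled lifting theorem), the integral dominates $\frac{1}{2\pi}\int_{\mathbb{R}}\|\widehat\psi(s)\|_{-\frac{1}{2},|s|,\Gamma}^2\,d\eta\simeq\|\psi\|_{0,-\frac{1}{2},\Gamma,\ast}^2$. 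Finally, \eqref{eq:coerc} combined with this last lifting bound is, for each fixed $s$, an ellipticity estimate for the form of $\bar s\,V_s$ on $\widetilde H^{-\frac{1}{2}}(\Gamma)$ equipped with $\|\cdot\|_{-\frac{1}{2},|s|,\Gamma}$, so Lax--Milgram gives invertibility with $\|\lambda\|_{-\frac{1}{2},|s|,\Gamma}\lesssim_\sigma |s|\,\|V_s\lambda\|_{\frac{1}{2},|s|,\Gamma}$, and multiplying by $|s|^{2r}$ and integrating in $\eta$ yields \eqref{mapping_inverse}.

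I expect the main obstacle to be bookkeeping the frequency-explicit constants: tracking the powers of $|s|$ and $\sigma$ through \eqref{eq:energyid}, \eqref{eq:coerc}, the $|s|$-scaled trace and lifting inequalities, and the $H^{-\frac{1}{2}}$ estimate for $[\partial_n U]$, so that exactly one time derivative is lost in \eqref{mapping} and \eqref{mapping_inverse} and the coercivity emerges with the norm index $(0,-\frac{1}{2})$. A secondary, genuinely technical point is justifying \eqref{eq:energyid} and the jump relations when $\Gamma$ is an open polygonal arc, where $\widetilde H^{-\frac{1}{2}}(\Gamma)$ rather than $H^{-\frac{1}{2}}(\Gamma)$ is the correct density space and $U$ need not have two-sided pointwise traces, and checking that the scaled trace and lifting constants for a fixed polygonal $\Gamma$ do not degenerate at its corners.
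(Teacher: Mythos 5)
The paper does not prove Theorem \ref{mapthm}; it only cites \cite{bh, costabel04, hd} (and \cite{setup} for $\partial\Gamma\neq\emptyset$), and your sketch is exactly the Fourier--Laplace/energy argument of those references: transform to the family $V_s$ with kernel $\tfrac{1}{2\pi}K_0(s|x-y|)$, use the single-layer lift, the identity $\mathrm{Re}\bigl(\bar s\,E_s(U)\bigr)=\sigma\bigl(\|\nabla U\|^2+|s|^2\|U\|^2\bigr)$, the $|s|$-scaled trace, lifting and jump estimates, and Plancherel with the weight $|s|^{2r}$. The argument is sound, including the bookkeeping that loses exactly one power of $|s|$ in \eqref{mapping} and \eqref{mapping_inverse} and yields the coercivity in the $(0,-\tfrac12)$ norm, so no further comparison is needed.
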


This theorem is well-known and documented in \cite{bh, costabel04, hd}, as well as \cite{setup} when $\partial\Gamma \neq \emptyset$.
We conclude that the bilinear form $B(\cdot,\cdot)$ is continuous and weakly coercive: 
\begin{proposition}\label{DPbounds} For every $\phi,\psi \in H^1_\sigma( \mathbb{R}^+, \widetilde{H}^{-\frac{1}{2}}(\Gamma))$ there holds:
\begin{equation}|B(\psi,\phi)| \lesssim \|\psi\|_{1,-\frac{1}{2},\Gamma, \ast} \|\phi\|_{1,-\frac{1}{2}, \Gamma,\ast} \quad\textrm{and}\quad\|\psi\|_{0,-\frac{1}{2},\Gamma,\ast}^2 \lesssim B(\psi,\psi) . \end{equation}
\end{proposition}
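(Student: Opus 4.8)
The plan is to derive both bounds in Proposition~\ref{DPbounds} directly from Theorem~\ref{mapthm} by unwinding the definition of $B(\cdot,\cdot)$ and applying the mapping properties together with the natural duality pairing between $\widetilde{H}^{-1/2}(\Gamma)$ and $H^{1/2}(\Gamma)$. Throughout I would work with the fixed parameter $\sigma>0$ and suppress it in the notation, since all constants are allowed to depend on it.

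\medskip

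\noindent\emph{Continuity.} Recall that
\[
B(\psi,\phi) = \iint_{\mathbb{R}^+\times\Gamma} \partial_t \mathcal{V}\psi(t,x)\,\phi(t,x)\,d\gamma_x\,d_\sigma t .
\]
The first step is to recognize the spatial integral, for each fixed $t$, as the duality pairing $\langle \partial_t\mathcal{V}\psi(t,\cdot),\phi(t,\cdot)\rangle_{H^{1/2}(\Gamma)\times\widetilde{H}^{-1/2}(\Gamma)}$, so that after integrating in time one can estimate
\[
|B(\psi,\phi)| \lesssim \|\partial_t \mathcal{V}\psi\|_{0,\frac{1}{2},\Gamma,\ast}\,\|\phi\|_{0,-\frac{1}{2},\Gamma,\ast}
\]
by Cauchy--Schwarz in $t$ combined with the pointwise-in-$t$ duality estimate. (Here one uses that the $H^r_\sigma(\mathbb{R}^+,\cdot)$ norm with $r=0$ is exactly the $L^2(d_\sigma t)$ norm of the spatial norm.) Next, $\|\partial_t\mathcal{V}\psi\|_{0,\frac{1}{2},\Gamma,\ast} \leq \|\mathcal{V}\psi\|_{1,\frac{1}{2},\Gamma,\ast}$ since a time derivative costs one order of temporal regularity, and then part~\textbf{a)} of Theorem~\ref{mapthm} with $r=1$ gives $\|\mathcal{V}\psi\|_{1,\frac{1}{2},\Gamma,\ast} \lesssim \|\psi\|_{2,-\frac{1}{2},\Gamma,\ast}$. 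This, however, only yields a bound in terms of the $\|\cdot\|_{2,-\frac12,\Gamma,\ast}$ norm; to land on $\|\psi\|_{1,-\frac12,\Gamma,\ast}$ as stated, the cleaner route is to apply Theorem~\ref{mapthm}\textbf{a)} with $r=0$ to get $\mathcal{V}:H^1_\sigma(\mathbb{R}^+,\widetilde H^{-1/2})\to H^0_\sigma(\mathbb{R}^+,H^{1/2})$ and to move the time derivative onto $\phi$ by the identity $\int \partial_t(\mathcal V\psi)\,\phi\, d_\sigma t = -\int \mathcal V\psi\,(\partial_t\phi - 2\sigma\phi)\,d_\sigma t$ (integration by parts in $d_\sigma t = e^{-2\sigma t}dt$, with vanishing boundary terms since everything is supported in $t>0$ and decays). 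Then Cauchy--Schwarz and continuity of $\mathcal V$ give $|B(\psi,\phi)|\lesssim \|\mathcal V\psi\|_{0,\frac12,\Gamma,\ast}(\|\partial_t\phi\|_{0,-\frac12,\Gamma,\ast} + \|\phi\|_{0,-\frac12,\Gamma,\ast}) \lesssim \|\psi\|_{1,-\frac12,\Gamma,\ast}\|\phi\|_{1,-\frac12,\Gamma,\ast}$, as desired.

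\medskip

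\noindent\emph{Coercivity.} This is immediate: by definition $B(\psi,\psi) = \iint_{\mathbb{R}^+\times\Gamma}\partial_t(\mathcal{V}\psi)\,\psi\,d\gamma_x\,d_\sigma t$, which is exactly the quantity appearing in part~\textbf{b)} of Theorem~\ref{mapthm}, so $B(\psi,\psi)\gtrsim_\sigma \|\psi\|_{0,-\frac12,\Gamma,\ast}^2$.

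\medskip

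The only genuine subtlety — and the step I would be most careful about — is the bookkeeping of temporal orders in the continuity estimate: making sure the time derivative $\partial_t$ is shifted (via integration by parts against the weight $e^{-2\sigma t}$) in a way that keeps the final bound symmetric in $\psi$ and $\phi$ with exactly one temporal derivative on each, rather than two on one factor. The boundary terms in that integration by parts vanish because both functions are supported in $\mathbb{R}^+\times\Gamma$ and the relevant Sobolev regularity forces decay as $t\to\infty$; this should be stated but needs no computation. Everything else is a routine application of Cauchy--Schwarz and the mapping properties already granted by Theorem~\ref{mapthm}.
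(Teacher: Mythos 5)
Your proof is correct and follows essentially the same route as the paper: continuity is obtained from Theorem \ref{mapthm}a) with $r=0$ after moving the time derivative onto $\phi$ and applying the $H^{1/2}$--$\widetilde H^{-1/2}$ duality with Cauchy--Schwarz in $t$, and coercivity is exactly Theorem \ref{mapthm}b). You are merely more explicit than the paper about the integration by parts against the weight $e^{-2\sigma t}$ (the extra $2\sigma\|\phi\|_{0,-\frac12,\Gamma,\ast}$ term, absorbed since $\|\phi\|_{0,-\frac12,\Gamma,\ast}\lesssim_\sigma\|\phi\|_{1,-\frac12,\Gamma,\ast}$), a step the paper performs silently in writing $|B(\psi,\phi)|\leq\|\mathcal{V}\psi\|_{0,\frac12,\Gamma}\|\partial_t\phi\|_{0,-\frac12,\Gamma,\ast}$.
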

\begin{proof}
The upper bound follows from Theorem \ref{mapthm}, part a):\\
 $$|B(\psi,\phi)| \leq \|\mathcal{V}\psi\|_{0,\frac{1}{2},\Gamma} \|\partial_t\phi\|_{0,-\frac{1}{2}, \Gamma,\ast}\lesssim \|\psi\|_{1,-\frac{1}{2},\Gamma, \ast} \|\phi\|_{1,-\frac{1}{2}, \Gamma,\ast}.$$
 The lower bound is exactly the assertion in Theorem \ref{mapthm}, part b).
\end{proof}
Note the loss of a time derivative between the upper and lower estimates and in particular that 
\begin{equation}
   \|\psi\|_{0,-\frac{1}{2},\Gamma,\ast}^2 \lesssim B(\psi,\psi) \lesssim \|\psi\|^2_{1,-\frac{1}{2},\Gamma, \ast}\,.
\end{equation}
Alternative inf-sup stable bilinear forms are discussed in \cite{sut}.\\
Then we recall the weak formulation of equation \eqref{weaklysingeq}:
\begin{equation}\label{NP}\small
find \quad\psi \in H^1_\sigma( \mathbb{R}^+, \widetilde{H}^{-\frac{1}{2}}(\Gamma))\quad s.t.\quad B(\psi,\phi)=\langle \partial_t f,\phi\rangle\ \,\,\forall \phi \in H^1_\sigma( \mathbb{R}^+,\widetilde{H}^{-\frac{1}{2}}(\Gamma))\ ,
\end{equation}
as well as its Galerkin discretization in a subspace $V_{\Delta t,\Delta x} \subset H^1_\sigma( \mathbb{R}^+, \widetilde{H}^{-\frac{1}{2}}(\Gamma))$:
\begin{equation}\label{DPdisc}\small
find \,\,\psi_{\Delta t,\Delta x} \in V_{\Delta t,\Delta x}\,\, s.t. \quad B(\psi_{\Delta t,\Delta x},\phi_{\Delta t,\Delta x})=\langle \partial_t f,\phi_{\Delta t,\Delta x}\rangle\ \,\,\forall \phi_{\Delta t,\Delta x} \in V_{\Delta t,\Delta x}\,.
\end{equation}\vspace{-0.5cm}
\begin{corollary} 
Let $f \in H^2_\sigma(\mathbb{R}^+,H^{\frac{1}{2}}(\Gamma))$. The weak formulation \eqref{NP} and its \linebreak Galerkin discretization \eqref{DPdisc}
admit unique solutions $\psi \in H^1_\sigma(\mathbb{R}^+,\widetilde{H}^{-\frac{1}{2}}(\Gamma))$ and \linebreak$\psi_{\Delta t,\Delta x} \in V_{\Delta t,\Delta x}$, respectively.
\end{corollary}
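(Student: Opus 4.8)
The plan is to treat the continuous problem \eqref{NP} and its Galerkin discretization \eqref{DPdisc} by different arguments, since the loss of a time derivative between the two estimates in Proposition~\ref{DPbounds} prevents a direct appeal to the Lax--Milgram lemma.

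For the continuous problem I would deduce existence directly from the invertibility of $\mathcal{V}$ in Theorem~\ref{mapthm}. Since $f \in H^2_\sigma(\mathbb{R}^+,H^{\frac{1}{2}}(\Gamma))$, part b) of Theorem~\ref{mapthm} applied with $r=1$ gives $\psi := \mathcal{V}^{-1} f \in H^1_\sigma(\mathbb{R}^+,\widetilde{H}^{-\frac{1}{2}}(\Gamma))$, and by construction $\mathcal{V}\psi = f$. Differentiating in time and pairing with an arbitrary $\phi \in H^1_\sigma(\mathbb{R}^+,\widetilde{H}^{-\frac{1}{2}}(\Gamma))$ then yields $B(\psi,\phi) = \langle \partial_t \mathcal{V}\psi,\phi\rangle = \langle \partial_t f,\phi\rangle$, so $\psi$ solves \eqref{NP}. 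For uniqueness, the difference $\psi$ of two solutions satisfies $B(\psi,\phi)=0$ for all admissible $\phi$; choosing $\phi=\psi$ and using the coercivity estimate $\|\psi\|_{0,-\frac{1}{2},\Gamma,\ast}^2 \lesssim B(\psi,\psi)=0$ forces $\psi=0$, because $\|\cdot\|_{0,-\frac{1}{2},\Gamma,\ast}$ is a norm on $H^1_\sigma(\mathbb{R}^+,\widetilde{H}^{-\frac{1}{2}}(\Gamma)) \subset L^2_\sigma(\mathbb{R}^+,\widetilde{H}^{-\frac{1}{2}}(\Gamma))$. Alternatively, uniqueness is immediate from the injectivity of $\mathcal{V}$ implied by Theorem~\ref{mapthm}~b).

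For the discretization \eqref{DPdisc}, existence and uniqueness are equivalent, since \eqref{DPdisc} is a square linear system in the finite-dimensional space $V_{\Delta t,\Delta x}$. It therefore suffices to check that $B(\psi_{\Delta t,\Delta x},\phi_{\Delta t,\Delta x})=0$ for all $\phi_{\Delta t,\Delta x}\in V_{\Delta t,\Delta x}$ implies $\psi_{\Delta t,\Delta x}=0$. Taking $\phi_{\Delta t,\Delta x}=\psi_{\Delta t,\Delta x}$ and invoking the coercivity estimate of Proposition~\ref{DPbounds} gives $\|\psi_{\Delta t,\Delta x}\|_{0,-\frac{1}{2},\Gamma,\ast}=0$, hence $\psi_{\Delta t,\Delta x}=0$ because $V_{\Delta t,\Delta x}$ is a subspace of $L^2_\sigma(\mathbb{R}^+,\widetilde{H}^{-\frac{1}{2}}(\Gamma))$, on which $\|\cdot\|_{0,-\frac{1}{2},\Gamma,\ast}$ is a genuine norm.

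The only point requiring care — the ``main obstacle'', such as it is — is precisely that coercivity holds in the weaker norm $\|\cdot\|_{0,-\frac{1}{2},\Gamma,\ast}$ rather than in the energy norm $\|\cdot\|_{1,-\frac{1}{2},\Gamma,\ast}$, so well-posedness cannot be read off from Lax--Milgram and one must instead rely on the mapping properties of $\mathcal{V}$ in the continuous case and on finite-dimensionality in the discrete case. One should also keep track of the regularity bookkeeping, so that $\partial_t f \in H^1_\sigma(\mathbb{R}^+,H^{\frac{1}{2}}(\Gamma))$ and $\mathcal{V}^{-1}$ is applied with the correct index.
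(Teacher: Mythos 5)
Your argument is correct and uses exactly the ingredients the paper intends: the paper states this corollary without proof, immediately after Theorem~\ref{mapthm} and Proposition~\ref{DPbounds}, and your construction of $\psi=\mathcal{V}^{-1}f\in H^1_\sigma(\mathbb{R}^+,\widetilde{H}^{-\frac{1}{2}}(\Gamma))$ via the mapping property \eqref{mapping_inverse} with $r=1$, uniqueness via the weak coercivity $\|\psi\|_{0,-\frac{1}{2},\Gamma,\ast}^2\lesssim B(\psi,\psi)$, and injectivity-implies-surjectivity for the finite-dimensional Galerkin system is the standard route these results are meant to support. You also correctly identify why Lax--Milgram is unavailable (the norm gap between continuity and coercivity), which is precisely the point the paper emphasizes after Proposition~\ref{DPbounds}.
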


\section{Discretization}\label{Discretization}

We consider discretizations $\mathcal{T}$ of the space-time cylinder $[0,T] \times \Gamma$ by local tensor products in space and time: $[0,T] \times \Gamma= \bigcup_{S_j \in \mathcal{T}} \overline{S_j}$. Here, the $S_j = I_j \times \Gamma_j$ are pairwise disjoint Cartesian products of a time interval $I_j = [\underline{t}_{j}, \overline{t}_j)$ and a segment $\Gamma_j \subset \Gamma$. We write $\Delta t_j=\overline{t}_j-\underline{t}_j$ for the size of the local time step and denote the diameter of $\Gamma_j$ by $\Delta x_j$. Further,  $\Delta t:=\max_j \Delta t_j$, $\Delta x:=\max_j \Delta x_j$. On $I_j$, resp.~$\Gamma_j$, we consider spaces $V_{\Delta t_j}$ and $V_{\Delta x_j}$ of polynomial functions.
The discretization space $V_{\Delta t,\Delta x}$ then consists of functions $\psi$ on $[0,T] \times \Gamma$, such that  $\psi|_{S_j} \in V_{\Delta t_j} \otimes V_{\Delta x_j}$, i.e.~lies in the tensor product of $V_{\Delta t_j}$ and $V_{\Delta x_j}$, for every $S_j$.

In our numerical examples, the {initial} mesh $\mathcal{T}_0$ for the adaptive algorithm {is taken to be a global tensor product} with $N_t N_x$ space-time elements $S_{(n-1)N_x+i}\equiv I_n\times\Gamma_i$, as represented in Figure \ref{Fig:mesh_0}. For the {corresponding} discretization space {$V_{\Delta t,\Delta x}^0$}, we consider the local polynomial degree to be $0$, i.e.,~piecewise constant functions, {both in the space and time variables}. The basis functions of {$V_{\Delta t,\Delta x}^0$} are then given by products of characteristic functions, $\psi_{(n-1)N_x+i}(t,x):=\bar{\psi}_n(t)\bbar{\psi}_i(x)$, supported on a single element $S_{(n-1)N_x+i}$.
$$
\begin{array}{lccccc}
\cline{2-6}
\multicolumn{1}{l|}{I_{N_t}} & \multicolumn{1}{c|}{S_{(N_t-1)N_x+1}} & \multicolumn{1}{c|}{S_{(N_t-1)N_x+2}} & \multicolumn{1}{c|}{S_{(N_t-1)N_x+3}} & \multicolumn{1}{c|}{\ldots} & \multicolumn{1}{c|}{S_{N_tN_x}} \\ \cline{2-6} 
\multicolumn{1}{l|}{\vdots}        & \multicolumn{1}{c|}{\vdots}             & \multicolumn{1}{c|}{\vdots}             & \multicolumn{1}{c|}{\vdots}             & \multicolumn{1}{c|}{} & \multicolumn{1}{c|}{\vdots}               \\ \cline{2-6} 
\multicolumn{1}{l|}{I_3}     & \multicolumn{1}{c|}{S_{2N_x+1}}   & \multicolumn{1}{c|}{S_{2N_x+2}}   & \multicolumn{1}{c|}{\textcolor{blue}{S_{2N_x+3}}}   & \multicolumn{1}{c|}{\ldots} & \multicolumn{1}{c|}{S_{3N_x}}       \\ \cline{2-6} 
\multicolumn{1}{l|}{I_2}     & \multicolumn{1}{c|}{S_{N_x+1}}    & \multicolumn{1}{c|}{S_{N_x+2}}    & \multicolumn{1}{c|}{S_{N_x+3}}    & \multicolumn{1}{c|}{\ldots} & \multicolumn{1}{c|}{S_{2N_x}}       \\ \cline{2-6} 
\multicolumn{1}{l|}{I_1}     & \multicolumn{1}{c|}{S_1}          & \multicolumn{1}{c|}{S_2}          & \multicolumn{1}{c|}{S_3}          & \multicolumn{1}{c|}{\ldots} & \multicolumn{1}{c|}{S_{N_x}}        \\ \cline{2-6}
                             & \multicolumn{1}{c}{\Gamma_1}           & \multicolumn{1}{c}{\Gamma_2}           & \multicolumn{1}{c}{\Gamma_3}           & \multicolumn{1}{c}{\cdots}  & \multicolumn{1}{c}{\Gamma_{N_x}}            
\end{array}
$$
\vspace{-0.5cm}\captionof{figure}{Starting mesh $\mathcal{T}_0$.}\label{Fig:mesh_0} 

Based on an a posteriori error indicator and a marking rule, 
we will mark selected space-time elements (e.g.,~in blue in Figure \ref{Fig:mesh_0}). The marked space-time elements are then refined by halving them both in space  and in time, as depicted in Figure \ref{Fig:mesh_ref}.
$$
\begin{array}{|c|c|cc|c|c|}
\hline
S_{(N_t-1)N_x+1}                & S_{(N_t-1)N_x+2}                & \multicolumn{2}{c|}{S_{(N_t-1)N_x+2}}                        &             \ldots      & S_{N_t N_x}            \\ \hline
               \vdots             &              \vdots               & \multicolumn{2}{c|}{\vdots}                                    &                   &    \vdots                       \\ \hline
\multirow{2}{*}{$S_{2N_x+1}$} & \multirow{2}{*}{$S_{2N_x+2}$} & \multicolumn{1}{c|}{\textcolor{blue} {S_{N_t N_x +2}}} & \textcolor{blue} {S_{N_t N_x+3}} & \multirow{2}{*}{\ldots} & \multirow{2}{*}{$S_{3N_x}$} \\ \cline{3-4}
                            &                             & \multicolumn{1}{c|}{\textcolor{blue} {S_{2N_x+3}}}       & \textcolor{blue} {S_{N_t N_x+1}} &                   &                           \\ \hline
S_{N_x+1}                   & S_{N_x+2}                   & \multicolumn{2}{c|}{S_{N_x+3}}                           &              \ldots     & S_{2N_x}                  \\ \hline
S_1                         & S_2                         & \multicolumn{2}{c|}{S_3}                                 &             \ldots      & S_{N_x}                   \\ \hline
\end{array}
$$
\vspace{-0.5cm}\captionof{figure}{\label{Fig:mesh_ref}Example of mesh refinement.}
\vspace{0.cm}

In this way each marked element $S_j$ is split into 4 smaller space-time elements. We obtain a sequence $\mathcal{T}_k$, $k\geq 1$, of space-time mesh refinements of increasing dimension $N_k = \mathrm{dim} \ V_{\Delta t,\Delta x}^k$. The basis of $V_{\Delta t,\Delta x}^k$ again is given by the characteristic functions $\psi_j(t,x)=\bar{\psi}_j(t)\bbar{\psi}_j(x)$, $j=1,\ldots,N_k$, of $(S_j)_k=(I_j\times\Gamma_j)_k$, i.e.~the $j$-th element of the space-time mesh at refinement level $k$.



The space-time adaptive algorithm is based on the following a posteriori error estimate (stated in \cite{adaptive}), which involves the restriction of the residual $\mathcal{V} (\psi-\psi_{\Delta t,\Delta x}) = :\mathcal{R}$ to the space-time elements $S_j$:

\begin{theorem}\label{apostthm}  Let $\psi  \in H^{1}_\sigma(\mathbb{R}^+,H^{-\frac{1}{2}}(\Gamma))$ be the solution to \eqref{NP}, and $\psi_{\Delta t,\Delta x}$ the solution to \eqref{DPdisc}. 
Then 
\begin{equation}\label{aposteq}\|\psi-\psi_{\Delta t,\Delta x}\|_{0, -\frac{1}{2}, \Gamma, \ast}^2 \lesssim_\sigma  \sum_j\max\{\Delta t_j, \Delta x_j\}\ \left(\|\nabla\mathcal{R}\|_{0,0,  S_j}^2+\|\partial_t\mathcal{R}\|_{0,0,  S_j}^2\right) =:\sum_j \eta_j^2.\end{equation}
\end{theorem}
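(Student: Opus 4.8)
The plan is to derive the estimate by combining the weak coercivity of $\partial_t\mathcal{V}$ (Theorem \ref{mapthm}b) with the continuity of $\mathcal{V}^{-1}$ and a localized approximation argument for the residual. First I would use the lower bound in Proposition \ref{DPbounds}, or rather Theorem \ref{mapthm}b, applied to $\psi - \psi_{\Delta t,\Delta x}$, to obtain
\begin{equation*}
\|\psi-\psi_{\Delta t,\Delta x}\|_{0,-\frac{1}{2},\Gamma,\ast}^2 \lesssim_\sigma \iint_{\mathbb{R}^+\times\Gamma} \partial_t\bigl(\mathcal{V}(\psi-\psi_{\Delta t,\Delta x})\bigr)\,(\psi-\psi_{\Delta t,\Delta x})\ d\gamma_x\,d_\sigma t = \iint_{\mathbb{R}^+\times\Gamma}\partial_t\mathcal{R}\,(\psi-\psi_{\Delta t,\Delta x})\ d\gamma_x\,d_\sigma t.
\end{equation*}
The Galerkin orthogonality $B(\psi-\psi_{\Delta t,\Delta x},\phi_{\Delta t,\Delta x})=0$ for all $\phi_{\Delta t,\Delta x}\in V_{\Delta t,\Delta x}$ means we may subtract $\iint \partial_t\mathcal{R}\,\phi_{\Delta t,\Delta x}$ for any discrete $\phi_{\Delta t,\Delta x}$; choosing $\phi_{\Delta t,\Delta x}$ to be a suitable (e.g.\ piecewise $L^2$-projection) approximation of $\psi-\psi_{\Delta t,\Delta x}$ on the mesh $\mathcal{T}$, the right-hand side becomes $\iint \partial_t\mathcal{R}\,\bigl((\psi-\psi_{\Delta t,\Delta x})-\phi_{\Delta t,\Delta x}\bigr)$.

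Next I would estimate this bilinear pairing by Cauchy--Schwarz, but done element by element: on each $S_j$ the factor $(\psi-\psi_{\Delta t,\Delta x})-\phi_{\Delta t,\Delta x}$ is controlled by a local approximation estimate. Since $\psi-\psi_{\Delta t,\Delta x}$ only lives in $\widetilde{H}^{-\frac12}$ globally, one has to be a little careful; the natural route is to recognize that $\psi-\psi_{\Delta t,\Delta x}=\mathcal{V}^{-1}\mathcal{R}$, so that $\psi-\psi_{\Delta t,\Delta x}$ inherits extra regularity from $\mathcal{R}$ via the continuity of $\mathcal{V}^{-1}$ in Theorem \ref{mapthm}b; in particular $\|\psi-\psi_{\Delta t,\Delta x}\|_{0,-\frac12,\Gamma,\ast}\lesssim_\sigma\|\mathcal{R}\|_{1,\frac12,\Gamma,\ast}$, and one also wants to bound the approximation error of $\phi_{\Delta t,\Delta x}$ against local norms of $\mathcal{R}$. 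The scaling factor $\max\{\Delta t_j,\Delta x_j\}$ arises here: an $L^2$-type projection error on $S_j$ measured in a negative-order ($H^{-1/2}$-type anisotropic) norm costs one power of the larger of the two local mesh sizes, which is exactly what interpolates between the $L^2$ estimate and the $H^{1/2}$ estimate needed on the dual side. Summing $\sum_j$ the products $\max\{\Delta t_j,\Delta x_j\}^{1/2}\,\|\partial_t\mathcal{R}\|_{0,0,S_j}$ (and the analogous $\nabla\mathcal{R}$ term coming from the spatial half-derivative) against $\max\{\Delta t_j,\Delta x_j\}^{-1/2}$ times the local error, then a discrete Cauchy--Schwarz in $j$, reproduces the stated sum of $\eta_j^2$ after absorbing one factor of $\|\psi-\psi_{\Delta t,\Delta x}\|_{0,-\frac12,\Gamma,\ast}$ from the other side.

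The main obstacle I anticipate is making the localized approximation estimate rigorous in the fractional, anisotropic space-time Sobolev norms: the norm $\|\cdot\|_{0,-\frac12,\Gamma,\ast}$ is nonlocal (it is $H^{-1/2}$ in space and involves the $\mathrm{e}^{-2\sigma t}$-weighted time integral), so splitting it as a sum over elements $S_j$ and getting the clean $\max\{\Delta t_j,\Delta x_j\}$ weight is the delicate point — one needs local inverse/approximation inequalities for piecewise polynomials in the relevant anisotropic norm, together with an argument that the off-diagonal interactions between elements are controlled (e.g.\ via a partition-of-unity or duality argument on the mesh). This is precisely the kind of technical lemma that the cited reference \cite{adaptive} supplies; in the write-up I would isolate it as the key local approximation estimate and then the rest of the proof is the assembly sketched above. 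The loss of a time derivative noted after Proposition \ref{DPbounds} is the structural reason the final bound involves $\partial_t\mathcal{R}$ and $\nabla\mathcal{R}$ (first-order quantities) rather than $\mathcal{R}$ itself.
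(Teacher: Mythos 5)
Your starting point (weak coercivity of $\partial_t\mathcal{V}$ applied to $e:=\psi-\psi_{\Delta t,\Delta x}$, so that $\|e\|_{0,-\frac12,\Gamma,\ast}^2\lesssim_\sigma \langle\partial_t\mathcal{R},e\rangle$) matches the paper's argument, and you correctly sense that the weight $\max\{\Delta t_j,\Delta x_j\}$ must come from a local Poincar\'e/Friedrichs-type inequality. But the mechanism you then propose --- Galerkin orthogonality followed by an element-by-element Cauchy--Schwarz against a quasi-interpolant of $e$ --- is the finite element playbook, and the step that would have to carry it does not exist here. The error $e$ lies only in $H^0_\sigma(\mathbb{R}^+,\widetilde{H}^{-1/2}(\Gamma))$: it is a negative-order distribution in space, so it has no element-local $L^2$-projection, no element-local norm to put on the dual side of your Cauchy--Schwarz, and the discrete summation step $\sum_j\max\{\cdot\}^{-1}\|e-\phi_{\Delta t,\Delta x}\|^2_{\cdot,S_j}\lesssim\|e\|^2_{0,-\frac12,\Gamma,\ast}$ would require a superadditive localization of a fractional negative-order norm that is precisely what fails. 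Your attempted repair --- that $e=\mathcal{V}^{-1}\mathcal{R}$ ``inherits extra regularity'' --- is a misreading of \eqref{mapping_inverse}: $\mathcal{V}^{-1}$ lands in $\widetilde{H}^{-1/2}(\Gamma)$ spatially and costs a time derivative, so $e$ gains nothing that would make it locally integrable.

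The paper's proof instead puts the entire localization on the residual, which is a genuine function of positive order. From coercivity and duality one gets $\|e\|_{0,-\frac12,\Gamma,\ast}\lesssim_\sigma\|\partial_t\mathcal{R}\|_{0,\frac12,\Gamma}$ (no Galerkin orthogonality is used; the estimate holds for an arbitrary discrete $\psi_{\Delta t,\Delta x}$). The $H^{1/2}$-in-space norm of $\partial_t\mathcal{R}$ is then split over the mesh by a partition of unity $\{\widetilde\psi_j\}$ subordinate to the space-time elements, using a colouring lemma to control the overlap constant and a superposition lemma for functions with disjoint supports (these localization lemmas are valid for \emph{positive} orders $0\le s\le 1$, which is why the argument must be run on $\mathcal{R}$ and not on $e$). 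On each patch one interpolates $\|\widetilde\psi_j f\|_{r,\frac12}\lesssim\|\widetilde\psi_j f\|_{r,0}^{1/2}\|\widetilde\psi_j f\|_{r,1}^{1/2}$ and applies the scaled Friedrichs inequality $\|\widetilde\psi_j f\|_{r,0}\lesssim_\sigma\max\{\Delta t_j,\Delta x_j\}\,\|\widetilde\psi_j f\|_{r,1}$, valid because $\widetilde\psi_j f$ vanishes on the patch boundary; this is exactly where the factor $\max\{\Delta t_j,\Delta x_j\}$ and the local quantities $\|\nabla\mathcal{R}\|^2_{0,0,S_j}+\|\partial_t\mathcal{R}\|^2_{0,0,S_j}$ appear. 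So the technical lemma you defer to the literature is not the element-wise approximation estimate for $e$ that your assembly needs, but a localization-plus-Friedrichs estimate for the positive-order norm of $\mathcal{R}$; with that substitution your first display already completes the proof.
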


\vspace{-0.2cm}

The numerical experiments in Section \ref{Numerical results} show the efficiency and robustness of this estimate even for {the} less regular discretization spaces {used here}.

\section{Algorithmic details} \label{algo}

\subsection{Space-time adaptive algorithm} \label{subsec:stadaptive}
The a posteriori error estimate from Theorem \ref{apostthm} leads to an adaptive mesh refinement procedure, based on the steps:

\vspace{-0.5cm}

 \begin{align*}
  \textbf{SOLVE}&\longrightarrow \textbf{ESTIMATE}\longrightarrow \textbf{MARK}\longrightarrow \textbf{REFINE}.
\end{align*}

\vspace{-0.2cm}

The precise algorithm we use is given as follows:\\

\vspace{-0.2cm}

\noindent {\textbf{Space--Time Adaptive Algorithm:}\\
 Input: Datum $f$, mesh $\mathcal{T}_0$, refinement parameter $\Theta  \in (0, 1)$, {exit} tolerance $\epsilon > 0$.
\begin{enumerate}
\item $k=0$.
\item Solve \eqref{DPdisc} on $\mathcal{T}_k$.
\item Compute the local error indicators $\eta_j^2$ in each space-time element $({S_j})_k \in \mathcal{T}_k$ as defined in Theorem \ref{apostthm}.
\item Stop if $\sum_j \eta^2_j < \epsilon$.
\item Find $\eta_{max}^2=\displaystyle\max_{j=1,\ldots,N_k} \eta_j^2$, otherwise.
\item Mark all $({S_j})_k\in \mathcal{T}_k$ with $\eta_j^2 > \Theta \,\eta_{max}^2$.
\item Refine each marked $(S_j)_k$ dividing by 2 both in time and space, as shown in Figure \ref{Fig:mesh_ref}, obtaining in the end a new mesh $\mathcal{T}_{k+1}$. 
\item $k=k+1$.
\item Go to 2.\\
\end{enumerate}

{\bf Remark.} (a) The choice $\Theta=0$ leads to uniform refinements in space and in time.\\
(b) A space-adaptive method based on time-integrated error indicators was considered in \cite{adaptive}. As shown in \cite{graded, mueller}, for polyhedral domains and screens time-independent spatially graded meshes with a sufficiently small uniform time step lead to quasi-optimal convergence rates in spite of the singular behavior of the solution at edges and corners.\\

Finally, since the procedures described in the following subsections hold at each level $k$ of refinement, for the remaining of the Section  we will ignore the index $k$ in the notation.

\subsection{Algebraic reformulation of the discrete problem}

At step 2 of the above written space-time adaptive algorithm, we have to solve \eqref{DPdisc} on the adaptive mesh of the current level of refinement. Taking into account the notation introduced in Section \ref{Discretization}, let us recall that the numerical solution  is written as
\begin{equation}
\label{sol_app}
\psi_{\Delta t,\Delta x}:=\sum_{j=1}^{N} \alpha_j\,\bar{\psi}_j(t)\bbar{\psi}_j(x)\,,\quad
\textrm{with}\quad\psi_{\Delta t,\Delta x\,|S_j}=\alpha_j\,.
\end{equation}
The vector $\pmb{\alpha}$ of coefficients in the linear combination \eqref{sol_app} is obtained as solution to the linear system


\begin{equation}\label{linear_system}
\mathbb{E}\,\pmb{\alpha}=\pmb{\beta}\,,
\end{equation}


whose matrix entries are defined as


\begin{eqnarray} 
\label{matrix_entries}
\hspace{-0.2in}
\mathbb{E}_{i,j}:&=&\int_0^T\int_{\Gamma}\dot{\bar{\psi}}_i(t)\bbar{\psi}_i(x)\int_0^t\int_{\Gamma}\frac{1}{2\pi}\frac{H(t-\tau-|x-y|)}{\sqrt{(t-\tau)^2-|x-y|^2}} \,\bar{\psi}_j(\tau)\bbar{\psi}_j(y)d\gamma_y\,d\tau\,d\gamma_x\,dt\,\nonumber\\
&=&\sum_{\mu,\nu=-1}^0  \int_{\Gamma_i}\bbar{\psi}_i(x)\int_{\Gamma_j}\bbar{\psi}_j(y)\;{\cal G}(t_{i+\mu},t_{j+\nu},|x-y|)\,d\gamma_y\,d\gamma_x\,,
\end{eqnarray}


where


$$
{\cal G}(t,\tau,|x-y|):=\frac{1}{2\pi}H(t-\tau-|x-y|)[\log(t-\tau+\sqrt{(t-\tau)^2-|x-y|^2})-\log(|x-y|)]\,.
$$
The elements of the vector $\pmb{\beta}$ on the right hand side of \eqref{linear_system} are defined as
\begin{equation}
\beta_i:=\int_0^T\int_{\Gamma}\dot{\bar{\psi}}_i(t)\bbar{\psi}_i(x) f(t,x)\,d\gamma_x\,dt=\sum_{\mu=-1}^0 (-1)^{\gamma+1}\int_{\Gamma_j}f(t_{i+\mu},x)\,d\gamma_x.
\end{equation}


Observe that the linear system \eqref{linear_system} loses the typical block lower triangular Toeplitz structure (see e.g. \cite{ADGMP2009}), which is only available for time independent space meshes and uniform time steps.\\
For the accurate evaluation of the weakly singular integrals in the matrix entries \eqref{matrix_entries}, we refer to \cite{ADGMP2009}.\\
To minimize the computational cost, at each refinement step the matrix entries not involved in the refinement are not recomputed. For the sake of clarity, we illustrate the matrix update in case of the refinement depicted in Figure \ref{Fig:mesh_ref}. There, the $(2N_x+3)$-th row and column will be replaced by the entries evaluated on a smaller space-time element $S_ {2N_x+3}$, and three new rows and columns, related to the space-time elements denoted by $S_{N_t N_x+1}, S_{N_t N_x+2}, S_{N_t N_x+3}$, are added to the previous matrix.

\subsection{Implementation of error indicators}

The implementation of the error indicator is based on the evaluation of
$\|\nabla\mathcal{R}\|_{0,0,  {S_j}}^2$ and $\|\partial_t\mathcal{R}\|_{0,0,  {S_j}}^2$, where,
in this setting,


\begin{equation}
    \mathcal{R}(t,x)=f(t,x)-\sum_{i=1}^{N} \alpha_i\int_{\Gamma_i}\int_{t_{i-1}}^{t_i}\frac{1}{2\pi}\frac{H(t-\tau-|x-y|)}{\sqrt{(t-\tau)^2-|x-y|^2}}d\gamma_y d\tau\,.
\end{equation}
For the time derivative $\partial_t\mathcal{R}(t,x)$, we have


\begin{eqnarray*}  
    \partial_t\mathcal{R}(t,x)&=&\partial_t f(t,x)-\sum_{i=1}^{N} \alpha_i\int_{\Gamma_i}\int_{t_{i-1}}^{t_i}-\partial_\tau\left(\frac{1}{2\pi}\frac{H(t-\tau-|x-y|)}{\sqrt{(t-\tau)^2-|x-y|^2}}\right)d\gamma_y d\tau\\
    &=&\partial_t f(t,x)+\frac{1}{2\pi}\sum_{i=1}^{N} \alpha_i\int_{\Gamma_i}\left(\frac{H(t-t_i-|x-y|)}{\sqrt{(t-t_i)^2-|x-y|^2}}-\frac{H(t-t_{i-1}-|x-y|)}{\sqrt{(t-t_{i-1})^2-|x-y|^2}}\right)d\gamma_y\,.
\end{eqnarray*}
In the particular case\footnote{{The more general case of a polygonal boundary $\Gamma$ would require an analysis of the integrals similar to that in \cite{RivPR2011}.}} of $\Gamma=\left\{(x,0),  x\in[0,1]\right\}$, {as considered in the numerical examples,} one has $\Gamma_i:=[x_{i-1},x_i]$ and
\begin{eqnarray*}
\int_{\Gamma_i}\frac{H(t-\tau-|x-y|)}{\sqrt{(t-\tau)^2-|x-y|^2}}dy&=&\int_{0}^{x_i}\frac{H(t-\tau-|x-y|)}{\sqrt{(t-\tau)^2-|x-y|^2}}dy-\int_{0}^{x_{i-1}}\frac{H(t-\tau-|x-y|)}{\sqrt{(t-\tau)^2-|x-y|^2}}dy\\
\int_{0}^{x_i}\frac{H(t-\tau-|x-y|)}{\sqrt{(t-\tau)^2-|x-y|^2}}dy&=&\int^{\min(x,x_i)}_{\max(x-t+\tau,0)}\frac{H(\min(x,x_i)-\max(x-t+\tau,0))}{\sqrt{(t-\tau)^2-|x-y|^2}}dy\\
&+&\int^{\min(t-\tau+x,x_i)}_{x}\frac{H(\min(t-\tau+x,x_i)-x)H(x_i-x)}{\sqrt{(t-\tau)^2-|x-y|^2}}dy\ .
\end{eqnarray*}
Integrating analytically and defining 
\begin{align*}
&    F(t,\tau,x,y):=\\& H(t-\tau)\bigg\{\!\!\!\!\!-\!\!\arctan\left(\frac{x-\min(x,y)}{\sqrt{(t-\tau)^2-(x-\min(x,y))^2}}\right)\vspace{0.2cm} \displaystyle H(\min(x,y)-\max(x-t+\tau,0))\vspace{0.2cm}\\
  &+\! \displaystyle\arctan\left(\frac{x-\max(x-t+\tau,0)}{\sqrt{(t-\tau)^2-(x-\max(x-t+\tau,0))^2}}\right) H(\min(x,y)-\max(x-t+\tau,0))\vspace{0.2cm}\\
&-\!\! \displaystyle\arctan\!\left(\frac{x-\min(t-\tau+x,y)}{\sqrt{(t-\tau)^2-(x-\min(t-\tau+x,y))^2}}\right)
\displaystyle H(\min(t-\tau+x,y)-x)H(y-x)\!\!\bigg\}\!,
\end{align*}
we obtain
\begin{equation*}  
\begin{array}{l}
    \displaystyle\partial_t\mathcal{R}(t,x)=\partial_t f(t,x)\\
    \displaystyle+\frac{1}{2\pi}\sum_{i=1}^{N} \alpha_i\big(F(t,t_i,x,x_i)-F(t,t_i,x,x_{i-1})-F(t,t_{i-1},x,x_i)+F(t,t_{i-1},x,x_{i-1})\big)\ .
\end{array}
\end{equation*}
The norm


\begin{equation*}
    \|\partial_t\mathcal{R}\|_{0,0,  {S_j}}^2=\int_{\Gamma_j}\int_{t_{j-1}}^{t_j}\bigg(\partial_t\mathcal{R}(t,x)\bigg)^2 dx dt 
\end{equation*}
is then computed by Gauss quadrature in space and time.

For the gradient $\nabla\mathcal{R}(t,x)$, in the particular case of $\Gamma=\left\{(x,0), x\in[0,1]\right\}$ {as considered in the numerical examples,} we observe that $\nabla\mathcal{R}(t,x)=\left(\partial_x\mathcal{R}(t,x),0\right)$,  with 
\begin{eqnarray*}  
    \partial_x\mathcal{R}(t,x)&=&\partial_x f(t,x)-\sum_{i=1}^{N} \alpha_i\int_{\Gamma_i}\int_{t_{i-1}}^{t_i}-\partial_y\left(\frac{1}{2\pi}\frac{H(t-\tau-|x-y|)}{\sqrt{(t-\tau)^2-|x-y|^2}}\right)dy d\tau\\
    &=&\partial_x f(t,x)+\frac{1}{2\pi}\sum_{i=1}^{N} \alpha_i\int_{t_{i-1}}^{t_i}\left(\frac{H(t-\tau-|x-x_i|)}{\sqrt{(t-\tau)^2-|x-x_i|^2}}-\frac{H(t-\tau-|x-x_{i-1}|)}{\sqrt{(t-\tau)^2-|x-x_{i-1}|^2}}\right)d\tau\,.
\end{eqnarray*}
Using that
$$
\int_{0}^{t_i}\frac{H(t-\tau-|x-y|)}{\sqrt{(t-\tau)^2-|x-y|^2}}d\tau=\int_0^{\min(t_i,t-|x-y|)}\frac{H(\min(t_i,t-|x-y|))}{\sqrt{(t-\tau)^2-|x-y|^2}}d\tau\,
$$
and defining
\begin{eqnarray*}
    S(t,\tau,x,y)&:=&\left(-\log\left(t-\min(\tau,t-|x-y|)+\sqrt{(t-\min(\tau,t-|x-y|))^2-(x-y)^2}\right)\right.\\
    &+&\left.\log\left(t+\sqrt{t^2-(x-y)^2}\right)\right)H(\min(\tau,t-|x-y|))\,,
\end{eqnarray*}
we obtain
\begin{equation*}
\begin{array}{l}
      \displaystyle\partial_x\mathcal{R}(t,x)=\partial_x f(t,x)\\
      \displaystyle +\frac{1}{2\pi}\sum_{i=1}^{N} \alpha_i\big(S(t,t_i,x,x_i)-S(t,t_{i-1},x,x_{i})-S(t,t_i,x,x_{i-1})+S(t,t_{i-1},x,x_{i-1})\big)\,.
\end{array}
\end{equation*}


The norm


\begin{equation*}
    \|\partial_x\mathcal{R}\|_{0,0,  {S_j}}^2=\int_{\Gamma_j}\int_{t_{j-1}}^{t_j}\bigg(\partial_x\mathcal{R}(t,x)\bigg)^2 dx dt 
\end{equation*}


is then computed by Gauss quadrature in space and time.

\section{Numerical results}\label{Numerical results}
In the following numerical experiments we solve \eqref{DPdisc} on $\Gamma=\left\{(x,0), x\in[0,1]\right\}$ for $t\in[0,T]=[0,1]$, using both adaptive and uniform discretizations. The Dirichlet datum $f$ is specified in the respective examples. Unless stated otherwise, the adaptive algorithm is started with a coarse initial mesh ${\cal T}_0$ using $\Delta t=\Delta x=0.25$, i.e., with $N_x=N_t=4$. The coarsest uniform space-time discretization parameters are given by $\Delta t=\Delta x=0.1$, i.e., $N_x=N_t=10$. The adaptive numerical experiments are then based on the space-time adaptive algorithm from Subsection \ref{subsec:stadaptive}. In addition to the theoretical local error indicators from Theorem \ref{apostthm}, i.e., $\eta_j^2:=\max\{\Delta t_j, \Delta x_j\}\ \left(\|\nabla\mathcal{R}\|_{0,0,  {S_j}}^2+\|\partial_t\mathcal{R}\|_{0,0,  {S_j}}^2\right)$, we also use heuristic local error indicators defined by


\begin{equation}\label{Euristic}
    \tilde{\eta}_j^2:= \Delta x_j\ \|\nabla\mathcal{R}\|_{0,0,  {S_j}}^2+\|\partial_t\mathcal{R}\|_{0,0,  {S_j}}^2\,.
\end{equation}


The presented errors are related to the discrete energy defined by 
\begin{equation}\label{energia}
   {E_{\Delta t,\Delta x}} := B(\psi_{\Delta t,\Delta x},\psi_{\Delta t,\Delta x})=\iint_{[0,T] \times \Gamma} \partial_t f(t,x)\ \psi_{\Delta t,\Delta x}(t,x)\ d\gamma_x \, dt\,. 
\end{equation}


The squared errors in energy norm will be plotted with respect to the total number of space-time $DoF\!s$, together with the corresponding error indicators $\displaystyle \sum_j \eta_j^2$ and $\displaystyle \sum_j \tilde{\eta}_j^2$.
\\
In every example the local error indicators are computed by a Gaussian quadrature rule with $16\times 16$ nodes in each cell of the space-time discretization, to achieve the necessary accuracy.

\subsection{Example with smooth benchmark solution}\label{example1}
We solve \eqref{DPdisc} for the Dirichlet datum $f$ which corresponds to the smooth exact solution $\psi(t,x)=xt$, shown {on the left of} Figure \ref{Fig:xt}.
We consider five uniform space-time uniform meshes, obtained from the initial space-time mesh by halving the space and time steps. In this way $\Delta t=\Delta x=1/N_x=1/N_t$.\\
Denoting by $E_{\Delta t, \Delta x}^{(i)}$ the discrete energy obtained for $\Delta x=\Delta t=1/(10\cdot 2^i)$, 
we find the empirical rate of convergence 
for $i=1,2,3$,
$$
\frac{E_{\Delta t, \Delta x}^{(i+1)}-E_{\Delta t, \Delta x}^{(i)}}{E_{\Delta t, \Delta x}^{(i)}-E_{\Delta t, \Delta x}^{(i-1)}}\simeq 4.115
$$
and infer a benchmark value $E\approx 0.37135e-01$ for the exact energy.

For the  space-time adaptive algorithm the refinement parameter in step 6.~is here chosen as $\Theta=0.2$.

We first consider the decay of the squared $L^2$ error 
$\|\psi-\psi_{\Delta t,\Delta x}\|_{0,0}^2$ and of the squared energy error for uniform refinements in terms of $DoF\!s=N_x N_t$, as depicted in Figure \ref{Example1_1}. 
These slopes correspond to an $O(\Delta x)$ decay for the squared $L^2$ error, an $O(\Delta x^2)$ 
\begin{minipage}{13.cm}
\hspace{0.in}\includegraphics[trim=0 0 0 0, clip,scale=0.33]{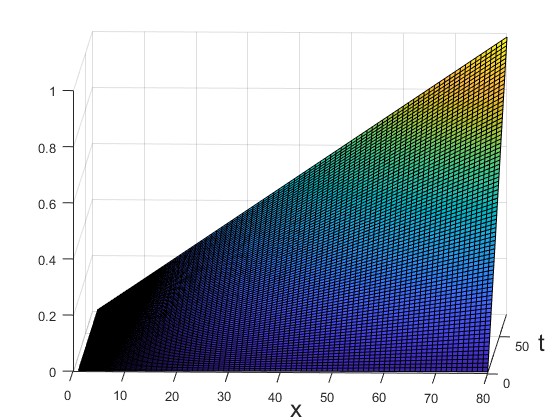}\hspace{-0.cm}
\includegraphics[trim=20 0 0 0, clip,scale=0.44]{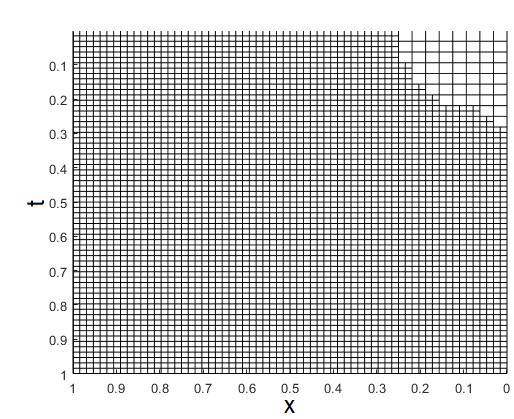}
\vspace{-0.2cm}\captionof{figure}{{Example \ref{example1}: exact solution (left) and refined mesh (right) using the theoretical error indicator and $\Theta=0.2$.}}\vspace{-0.2cm}
\label{Fig:xt}
\end{minipage}

decay for the energy error, in agreement with the heuristic indicator, and an $O(\Delta x^3)$ decay for the theoretical error indicator.
Hence, we note that the squared error in energy norm, like the heuristic error indicator, behaves as $\Delta x \|\psi-\psi_{\Delta t,\Delta x}\|_{0,0}^2\simeq \|\psi-\psi_{\Delta t,\Delta x}\|_{\frac{1}{2}, -\frac{1}{2}, \Gamma, \ast}^2$, while the theoretical error indicator behaves as $\Delta x \Delta t \|\psi-\psi_{\Delta t,\Delta x}\|_{0,0}^2$. Both are greater than or equal to $\|\psi-\psi_{\Delta t,\Delta x}\|_{0, -\frac{1}{2}, \Gamma, \ast}^2$, {and the theoretical error indicator estimates a weaker norm than the energy norm}, in agreement with the theoretical analysis.\\
In Figure \ref{Example1_2}, we show the decay of the energy error in terms of $DoF\!s$ for adaptive space-time refinements, driven respectively by theoretical and heuristic error indicators. The slopes for both energy errors are similar and follow the behavior of the heuristic error indicator, while the theoretical error indicator decays faster, as explained above.\\
Uniform refinements are quasi-optimal for the smooth solutions like in the current example. Figure \ref{Example1_3} accordingly shows the same convergence rate $O(DoF\!s^{-1})$ of the squared energy errors obtained for the uniform and adaptive refinements. At a fixed number of $DoF\!s$ the error obtained with a uniform mesh is lower. {The resulting adaptive refinements are almost uniform in spite of the low value of $\Theta$, as shown in Figure \ref{Fig:xt} (right) for refinements based on the theoretical error indicator.} \\

\begin{center}
\begin{minipage}{13.cm}
\hspace{0.5in}\includegraphics[trim=0 0 0 0, clip,scale=0.37]{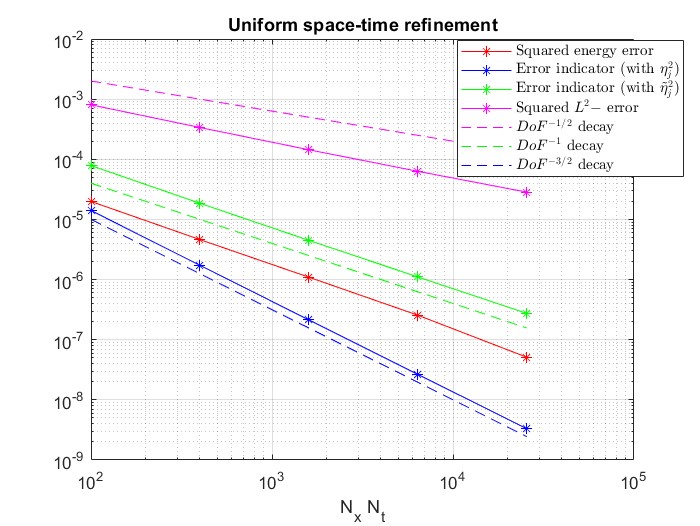}
\vspace{-0.2cm}\caption{Example \ref{example1}: decay of squared $L^2$  and energy errors w.r.t.~degrees of freedom $DoF\!s=N_xN_t$.}
\label{Example1_1}
\end{minipage}
\end{center}
\begin{center}
\begin{minipage}{13.cm}
\hspace{0.5in}\includegraphics[trim=0 0 0 0, clip,scale=0.37]{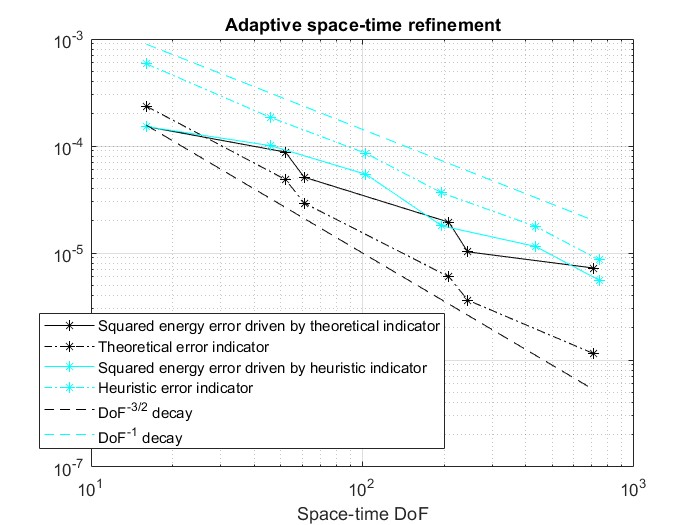}
\vspace{-0.2cm}\caption{Example \ref{example1}: decay of the energy errors w.r.t.~the space-time $DoF\!s$, driven respectively by theoretical and heuristic error indicators.}
\label{Example1_2}
\end{minipage}
\end{center}

\subsection{Example with peak-shaped datum}\label{example2}
We solve \eqref{DPdisc} for the Dirichlet datum $f$ shown in Figure \ref{Fig:Pyramid} on the left, which features a peak and has compact support in a small part of $[0,T]\times\Gamma$:
$$\small
f(t,x)=H(t)\left\{\begin{array}{ll}
     \sin^4(4\pi t)\sin^4\big(3\pi(x-1)\big)H(x-\frac{1}{3})H(\frac{2}{3}-x)&\textrm{for}\quad 0\leq t\leq \frac{1}{8}\\
     \sin^4(4\pi (-t-\frac{2}{8})\sin^4\big(3\pi(x-1)\big)H(x-\frac{1}{3})H(\frac{2}{3}-x)&\textrm{for} \quad \frac{1}{8}\leq t\leq \frac{2}{8}\\
     0 &\textrm{for}\quad t\geq \frac{2}{8}
\end{array}\right.
$$
The numerical solution  is shown in Figure \ref{Fig:Pyramid} on the right: it features two positive peaks and a negative peak, as well as waves traveling outwards from these peaks in space and time. We expect adaptive mesh refinements along the space-time support of the solution.
\begin{center}
\begin{minipage}{13.cm}
\hspace{0.5in}\includegraphics[trim=0 0 0 0, clip,scale=0.37]{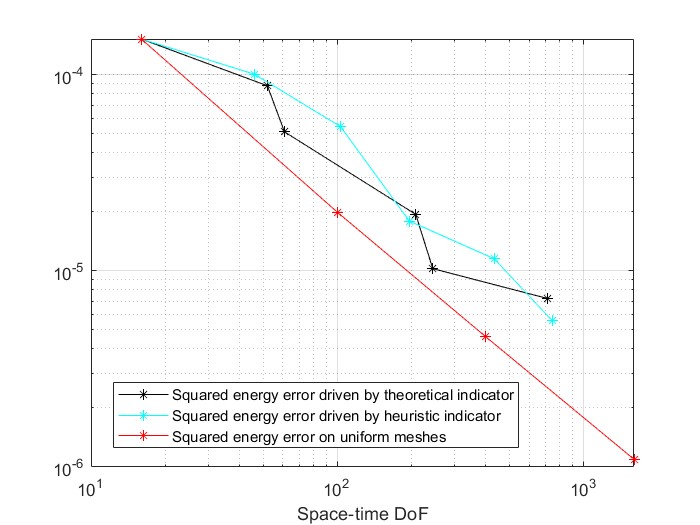}
\vspace{-0.2cm}\caption{Example \ref{example1}: comparison between the squared energy errors  for uniform and adaptive refinements.}\vspace{-0.2cm}
\label{Example1_3}
\end{minipage}
\end{center}
\begin{center}
\begin{minipage}{13.cm}
\hspace{0.in}\begin{center}\vspace{-0.3cm}
$\begin{array}{lc}
   \hspace{-0.5cm}\begin{minipage}{6.5cm}
\hspace{0.0cm}\includegraphics[trim=5 0 30 0, clip,scale=0.43]{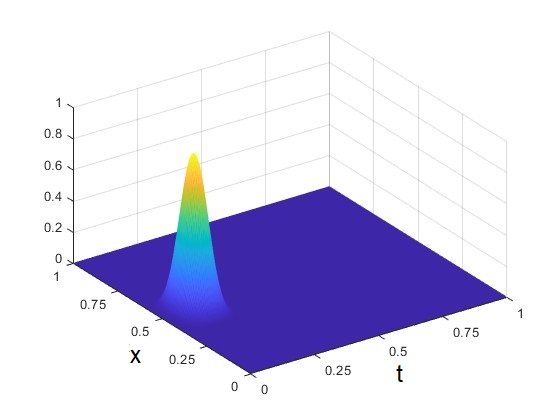}
\end{minipage}  & \hspace{-0.2cm}\begin{minipage}{6.cm}
\hspace{0.0cm}\includegraphics[trim=30 0 30 0, clip,scale=0.44]{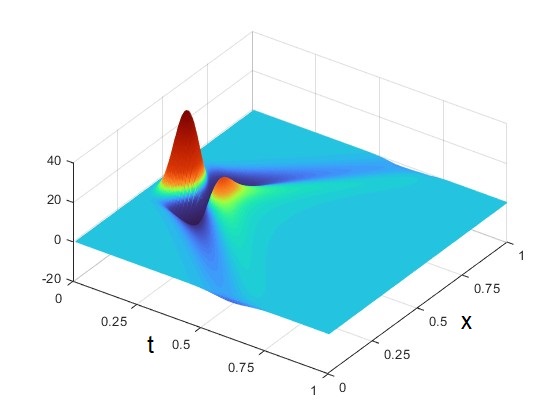}
\end{minipage}
\end{array}$\captionsetup{type=figure}
\vspace{-0.2cm}\captionof{figure}{Example \ref{example2}: on the left, peak-shaped Dirichlet datum (left), numerical solution  $\psi_{\Delta t, \Delta x}$ obtained with a uniform mesh of $N_x=N_t=160$ space-time elements (right).}\label{Fig:Pyramid}
\end{center}
\end{minipage}
\end{center}
As in the previous example, we consider five uniform  space-time uniform meshes, obtained from the initial space-time mesh by halving the space and time steps. In this way $\Delta t=\Delta x=1/N_x=1/N_t$. 
Denoting by $E_{\Delta t, \Delta x}^{(i)}$ the discrete energy obtained for $\Delta x=\Delta t=1/(10\cdot 2^i)$, 
we find the empirical rate of convergence 
for $i=1,2,3$,
$$
\frac{E_{\Delta t, \Delta x}^{(i+1)}-E_{\Delta t, \Delta x}^{(i)}}{E_{\Delta t, \Delta x}^{(i)}-E_{\Delta t, \Delta x}^{(i-1)}}\simeq 3.89
$$
and infer a benchmark value $E\approx 3.57403e+01$ for the exact energy.\\
For the  space-time adaptive algorithm the refinement parameter in step 6.~is here chosen as $\Theta=0.5$. \\
We first consider the decay of the squared energy error for uniform refinements in terms of $DoF\!s=N_x N_t$, as depicted in Figure \ref{Example2_1}. 
As before, these slopes correspond to an $O(\Delta x^2)$ decay for the energy error, in agreement with the heuristic indicator, and an $O(\Delta x^3)$ decay for the theoretical error indicator.

In Figure \ref{Example2_2} we show  the decay of the energy error in terms of $DoF\!s$  for adaptive space-time refinements, driven by the theoretical, respectively  heuristic, error indicators.

\begin{minipage}{13.cm}
\hspace{0.5in}\includegraphics[trim=0 0 0 0, clip,scale=0.37]{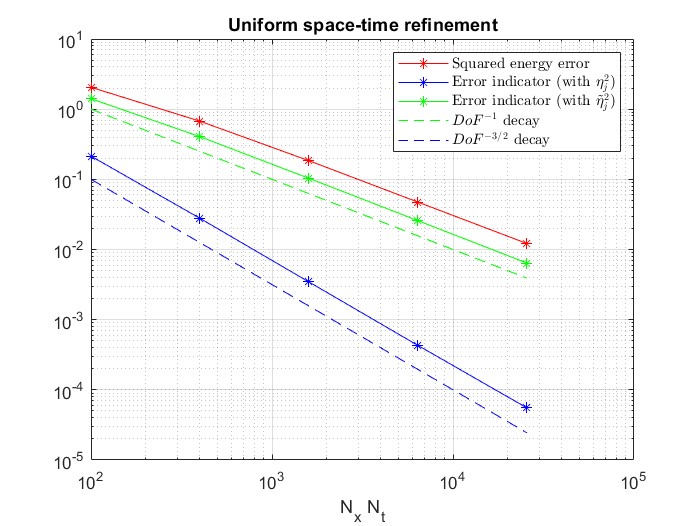}
\captionof{figure}{Example \ref{example2}: decay of squared $L^2$  and energy errors w.r.t.~degrees of freedom $DoF\!s=N_xN_t$.}
\label{Example2_1}
\end{minipage}
\begin{minipage}{13.cm}
\hspace{0.5in}\includegraphics[trim=0 0 0 0, clip,scale=0.37]{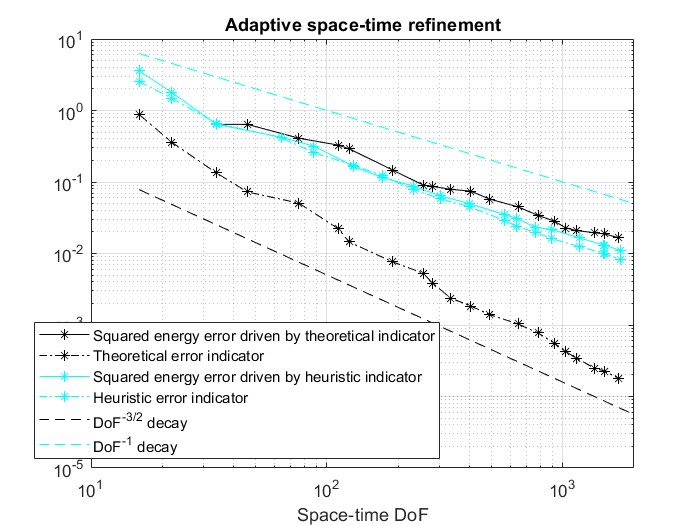}
\captionof{figure}{Example \ref{example2}: decay of the energy errors w.r.t.~the space-time $DoF\!s$, driven respectively by theoretical and heuristic error indicators.}
\label{Example2_2}
\end{minipage}

\vspace*{0.4cm}

It is worth noting that, as in Example \ref{example1}, the convergence of both energy errors is similar and follows the behavior of the heuristic error indicator. {Analogous to Example \ref{example1}, the theoretical error indicator decays faster, as analytically expected, estimating a weaker norm.}

In Figure \ref{Example2_3} we depict an adaptive mesh driven by the theoretical error indicator (left) and the corresponding numerical solution (right) obtained by the adaptive algorithm with {exit} tolerance $\epsilon=10^{-5}$.

Figure \ref{Example2_3_bis} presents meshes with similar numbers of $DoF\!s$, obtained using the space-time adaptive algorithm driven respectively by theoretical and heuristic error indicators: both show similar refinements in accordance with the expectations.

Finally, Figure \ref{Example2_4} (left) shows a comparison between the squared energy errors obtained by the uniform and adaptive refinements: the slopes for both energy errors are similar, in line with $O(DoF\!s^{-1})$, as the solution in this example is still smooth. Unlike in Example \ref{example1}, however, because of the features of the solution, at a fixed number of $DoF\!s$ the error obtained from adaptive refinements is significantly lower.\\
Figure \ref{Example2_4} (right) shows the squared energy error decay w.r.t.~memory consumption: for the error levels considered, the adaptive refinements also save memory compared to uniform refinements, even though the space-time matrix no longer has a Toeplitz structure. Advantages in memory can only be expected when the adaptive approach leads to a significant reduction of $DoF\!s$, which overcomes the savings from the Toeplitz structure for uniform meshes, as also in Example \ref{example4} or problems in $\mathbb{R}^3$.
\begin{minipage}{13.cm}
\hspace{0.in}\includegraphics[trim=0 0 0 0, clip,scale=0.46]{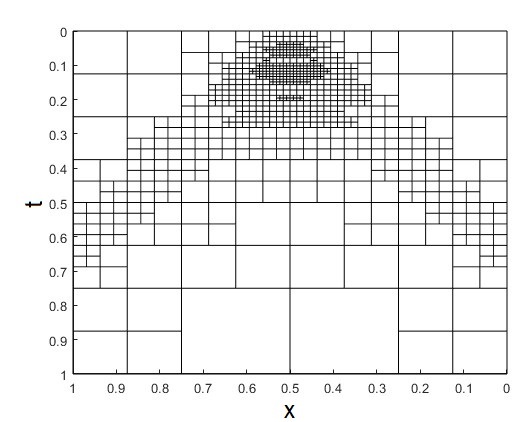}\hspace{-0.2cm}
\includegraphics[trim=0 0 0 0, clip,scale=0.46]{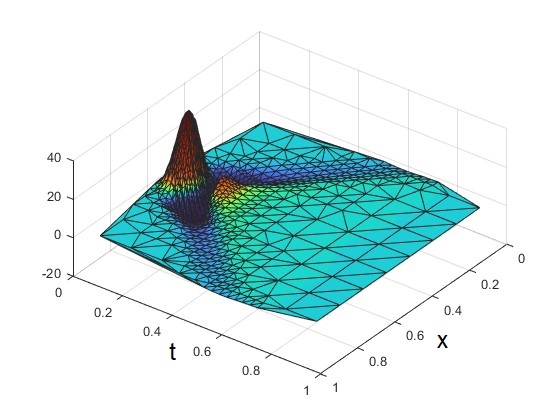}
\vspace{-0.2cm}\captionof{figure}{Example \ref{example2}: final mesh driven by theoretical error indicator, obtained by the adaptive algorithm with {exit} tolerance $\epsilon=10^{-5}$ (left), {corresponding to $15$ steps of the Space–Time Adaptive Algorithm, and} the numerical solution  $\psi_{\Delta t, \Delta x}$ (right).}\vspace{-0.2cm}
\label{Example2_3}
\end{minipage}
\begin{minipage}{13.cm}

     \includegraphics[trim=0 0 0 0, clip,scale=0.46]{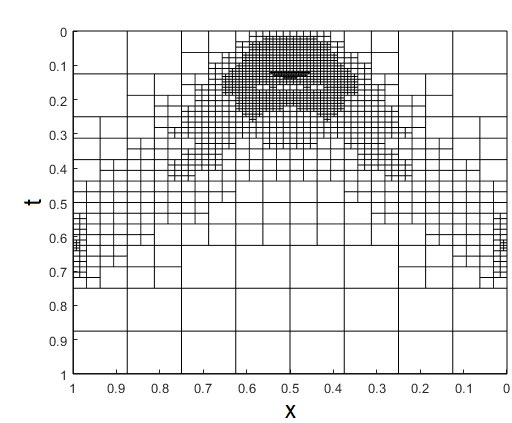}\hspace{0.1cm}\includegraphics[trim=15 0 0 0, clip,scale=0.46]{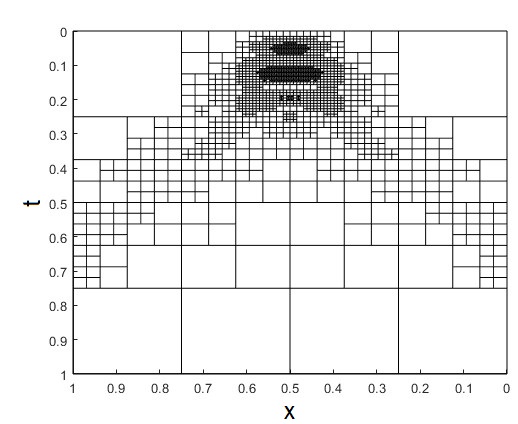} 
\hspace{-0.5cm}
\captionof{figure}{Example \ref{example2}: meshes having similar number of $DoF\!s$, obtained using adaptive algorithms driven respectively by theoretical (left) and heuristic (right) error indicators {with 24 iterations of the Space–Time Adaptive Algorithm}.}
\label{Example2_3_bis}
\end{minipage}

\begin{minipage}{13.cm}
\hspace{-0.7cm}
$\begin{array}{rl}
\begin{minipage}{6.4cm}
\includegraphics[trim=0 0 0 0, clip,scale=0.29]{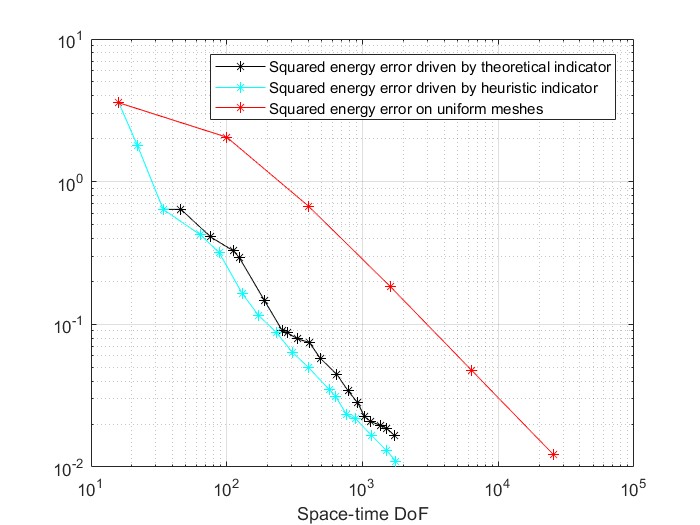}
\end{minipage}
&\hspace{-0.2cm}
\begin{minipage}{6.4cm}
\includegraphics[trim=20 0 44 0, clip,scale=0.36]{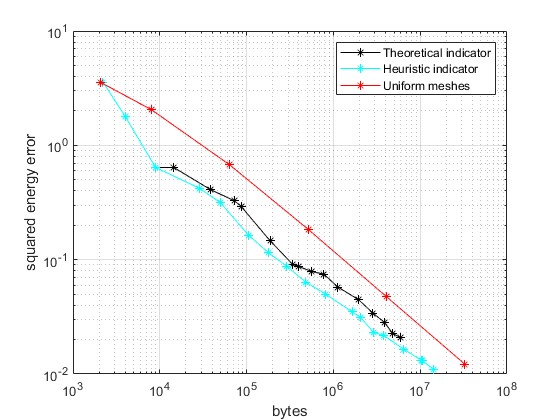}
\end{minipage}
\end{array}$
\vspace{-0.2cm}
\captionof{figure}{Example \ref{example2}: comparison between the squared energy errors  for uniform and adaptive refinements w.r.t.~$DoF\!s$ (left) and w.r.t.~memory consumption (right).}
\label{Example2_4}\vspace{-0.2cm}
\end{minipage}
\subsection{Example with  solution singular at the endpoints}\label{example3}
We solve \eqref{DPdisc} for the Dirichlet datum $f$ considered in \cite{ADGMP2009}, given by


$$
f(t,x)=H(t-kx)\left\{\begin{array}{ll}
     \sin^2\big(4\pi(t-kx)\big)&\textrm{for}\quad 0\leq t-kx\leq \frac{1}{8},\\
     1&\textrm{for}\quad t-kx\geq \frac{1}{8}, 
\end{array}\right. 
$$


with $k=\cos(\theta)$ and $\theta\in(0,\pi)$. To be specific, we fix 
$\theta=\pi/2$. 

The numerical solution  is shown in Figure \ref{Example3_0}: near the endpoints of $\Gamma$ it features the typical geometric singularity, tending to $\infty$ like the inverse of the square root of the distance to the closest endpoint. The singular behavior is further illustrated by the snapshot of the solution at fixed time $t=1$ in Figure \ref{Example3_0_bis} on the right. The evolution in time in the points $x=0.4875, 0.9875$ is shown in the same figure on the left, which suggests that adaptive mesh refinements may be expected near $t=0$, in addition to 
the endpoints of $\Gamma$.

Here we consider six uniform  space-time uniform meshes, obtained from the initial space-time mesh by halving the space and time steps. In this way $\Delta t=\Delta x=1/N_x= 1/N_t$. Denoting by $E_{\Delta t, \Delta x}^{(i)}$ the discrete energy obtained for $\Delta x=\Delta t=1/(10\cdot 2^i)$, we find the empirical rate of convergence 
for $i=1,2,3$,
$$
\frac{E_{\Delta t, \Delta x}^{(i+1)}-E_{\Delta t, \Delta x}^{(i)}}{E_{\Delta t, \Delta x}^{(i)}-E_{\Delta t, \Delta x}^{(i-1)}}\simeq 3.3372
$$
and infer a benchmark value 
$E\approx 2.07339e+01$ for the exact energy. \\
\begin{minipage}{13.cm}
\hspace{3cm}
\includegraphics[trim=0 0 0 0, clip,scale=0.43]{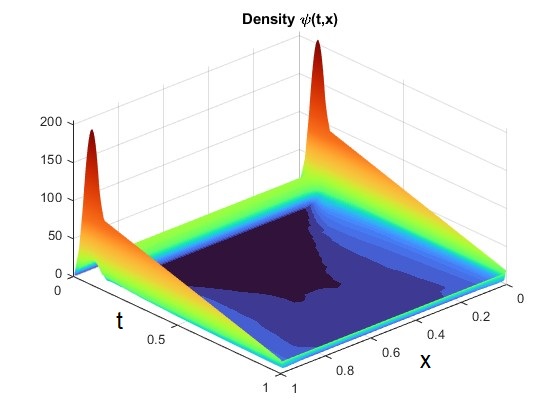}\vspace{-0.2cm}
\captionof{figure}{Example \ref{example3}: evolution of numerical solution $\psi_{\Delta t, \Delta x}$ in space and time}\vspace{-0.cm}
\label{Example3_0}
\end{minipage}



For the  space-time adaptive algorithm the refinement parameter in step 6.~is here chosen as $\Theta=0.5$.\\

We first consider the decay of the squared energy error for uniform refinements in terms of $DoF\!s=N_x N_t$, as depicted in Figure \ref{Example3_1}.  The theoretical error indicator decays as $O(\Delta x)$, as expected for a solution with square-root singularity at the crack tips \cite{graded}. The convergence of the squared energy error and  the heuristic error indicator is still in a pre-asymptotic regime and, for a growing number of $DoF\!s$, is expected to approach the slower rate of convergence $O(\Delta x)$.
In fact, as the mesh is the same for both error indicators, and since $\eta_j^2\leq \tilde{\eta}_j^2$ the green line cannot intersect the blue line but can, at most, approach it if {$\|\partial_t R\|$ is negligible w.r.t.~$\|\nabla R\|$.} As the heuristic error indicator is consistent with the squared energy error, asymptotically the red curve is expected to become parallel to the blue one.

\begin{minipage}{13.cm}
\hspace{0.cm}
\includegraphics[trim=0 0 0 0, clip,scale=0.34]{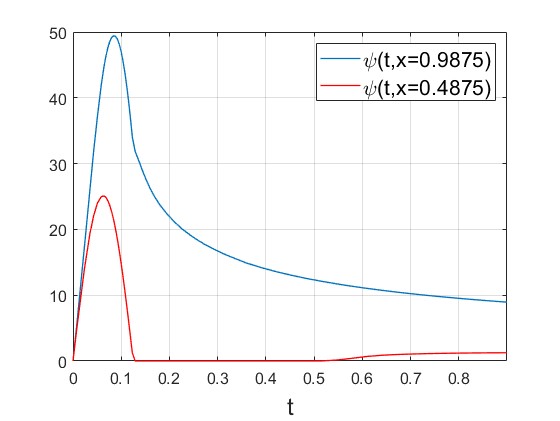}\hspace{-0.cm}\includegraphics[trim=30 0 50 0, clip,scale=0.34]{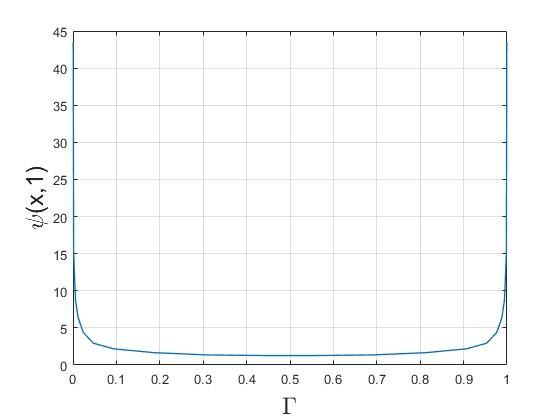}\vspace{-0.2cm}
\captionof{figure}{Example \ref{example3}: time profiles of  numerical solution $\psi_{\Delta t, \Delta x}$ at $x=0.4875, 0.9875$ (left), space profile at $t=1$ (right).}\vspace{-0.2cm}
\label{Example3_0_bis}
\end{minipage}

\begin{minipage}{13.cm}
\hspace{0.5in}\includegraphics[trim=0 0 0 0, clip,scale=0.37]
{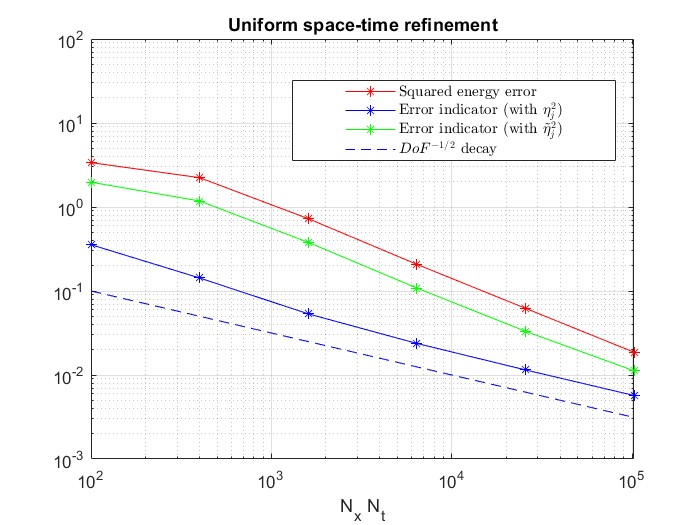}
\vspace{-0.2cm}\captionof{figure}{Example \ref{example3}: decay of squared $L^2$  and energy errors w.r.t.~degrees of freedom $DoF\!s=N_xN_t$.}\vspace{-0.2cm}
\label{Example3_1}
\end{minipage}
\begin{minipage}{13.cm}
\hspace{0.5in}\includegraphics[trim=0 0 0 0, clip,scale=0.37]{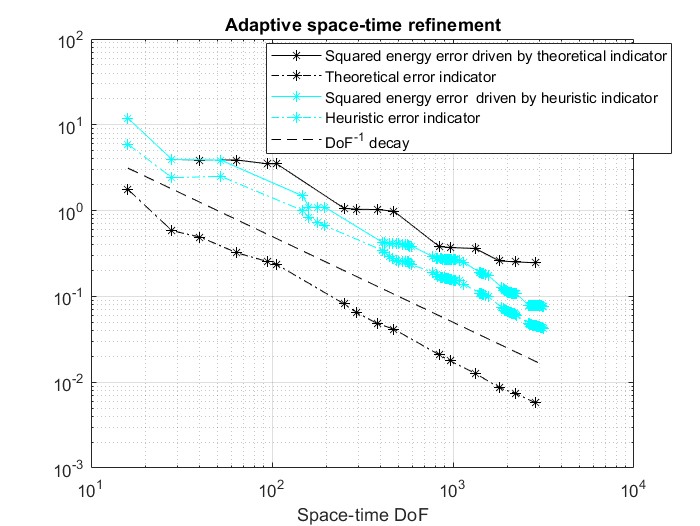}
\captionof{figure}{Example \ref{example3}: decay of the energy errors w.r.t.~the space-time $DoF\!s$, driven respectively by theoretical and heuristic error indicators.}
\label{Example3_2}
\end{minipage}

\vspace*{0.4cm}

In Figure \ref{Example3_2} we show the decay of the energy error in terms of $DoF\!s$  for adaptive space-time refinements, driven by the theoretical, respectively  heuristic, error indicators. Unlike in the previous experiments, we find the same convergence rates for both energy errors and their indicators in this problem, {indicating the dominant effect of the spatial singularity.}
Figure \ref{Example3_4} shows a comparison between the squared energy errors obtained by the uniform and adaptive refinements: the slopes for the adaptive approaches, in line with $O(DoF\!s^{-1})$, is about twice the rate $O(DoF^{-1/2})$ of the uniform approach, suggesting the expected convergence \cite{graded}.

In Figure \ref{Example3_3} we finally depict  adaptive meshes driven by the theoretical error indicator, for different initial meshes ${\cal T}_0$ and different {exit}
tolerances. Both show similar refinements in accordance with the expectations.

\begin{minipage}{13.cm}
\hspace{0.5in}\includegraphics[trim=0 0 0 0, clip,scale=0.37]{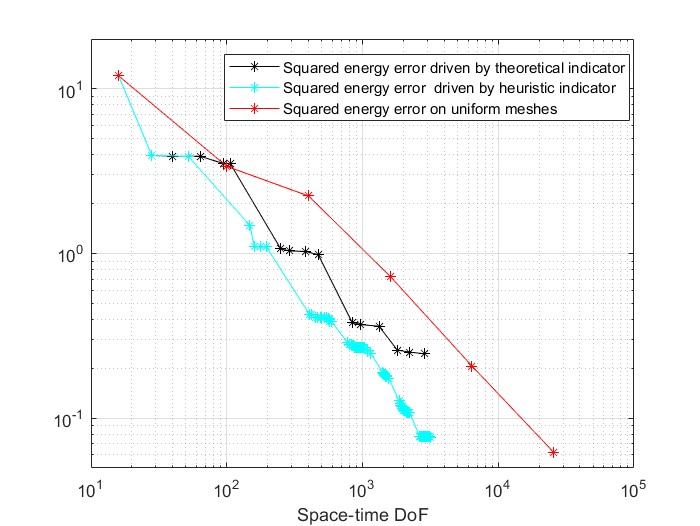}
\captionof{figure}{Example \ref{example3}: comparison between the squared energy errors for uniform and adaptive refinements.}\vspace{-0.2cm}
\label{Example3_4}
\end{minipage}
\begin{minipage}{13.cm}
\hspace{0.in}\begin{center}
$\begin{array}{rl}
   \hspace{-0.2cm}\begin{minipage}{6.5cm}
\includegraphics[trim=0 0 0 0, clip,scale=0.44]{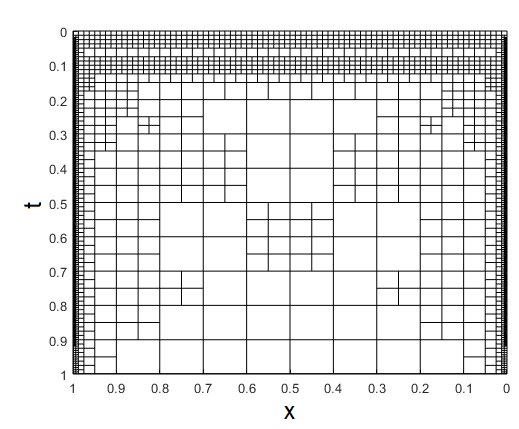}\end{minipage} &\hspace{-0.5cm}\begin{minipage}{6.5cm}
\includegraphics[trim=10 0 0 0, clip,scale=0.36]{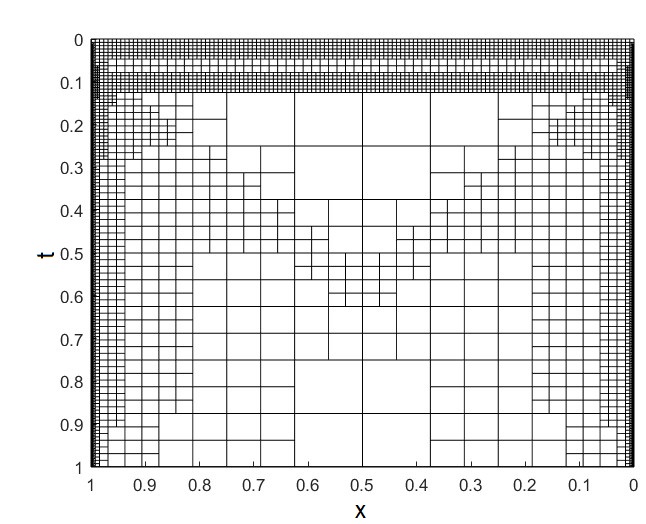}\end{minipage}
\end{array}$
\vspace{-0.2cm}\captionof{figure}{Example \ref{example3}: adaptive meshes driven by theoretical error indicator. On the left: ${\cal T}_0$ with $\Delta x=\Delta t=0.1$ and $\epsilon=10^{-5}$,  on the right: ${\cal T}_0$ with $\Delta x=\Delta t=0.5$ and $\epsilon=10^{-6}$}\vspace{-0.2cm}
\label{Example3_3}
\end{center}
\end{minipage}


\subsection{Example with solution singular in space and in time}\label{example4}
We solve \eqref{DPdisc} for the Dirichlet datum $f$ given by
$f(t,x)=H(t)t^{2/3}$.
As shown in Figure \ref{Example4_0}, the numerical solution contains the geometric singularity at the endpoints of $\Gamma$, as in Example \ref{example3}, as well as a mild time singularity at $t=0$. Adaptive mesh refinements may therefore be expected near $t=0$, in addition to the endpoints of $\Gamma$. 

As in the first two examples, we consider five uniform  space-time uniform meshes, obtained from the initial space-time mesh by halving the space and time steps. In this way $\Delta t=\Delta x=1/N_x=1/N_t$.
Denoting by $E_{\Delta t, \Delta x}^{(i)}$ the discrete energy obtained for $\Delta x=\Delta t=1/(10\cdot 2^i)$, 
we find the empirical rate of convergence 
for $i=1,2,3$,
$$
\frac{E_{\Delta t, \Delta x}^{(i+1)}-E_{\Delta t, \Delta x}^{(i)}}{E_{\Delta t, \Delta x}^{(i)}-E_{\Delta t, \Delta x}^{(i-1)}}\simeq 1.317
$$
and infer a benchmark value 
$E\approx 3.64917$ for the exact energy.\\ 
\begin{minipage}{12.cm}
\hspace{0.in}
\hspace{3cm}
\includegraphics[trim=40 0 0 0, clip,scale=0.43]{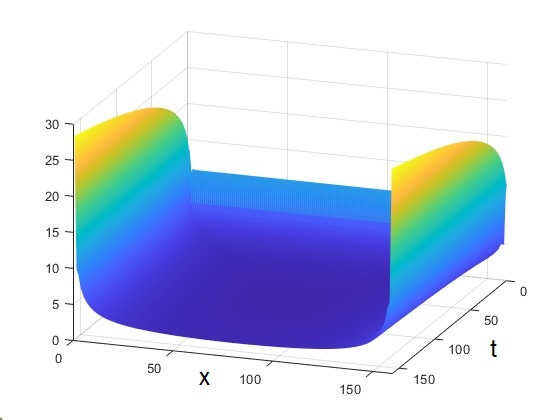}
\captionof{figure}{Example \ref{example4}: evolution of the numerical solution $\psi_{\Delta t, \Delta x}$ in space and time.}\label{Example4_0}
\end{minipage}

\vspace*{0.4cm}

For the  space-time adaptive algorithm the refinement parameter in step 6.~is here chosen as $\Theta=0.5$.

We first consider the decay of the squared energy error for uniform refinements in terms of $DoF\!s=N_x N_t$, as depicted in Figure \ref{Example4_1}.  The theoretical error indicator converges like $O(\Delta x)$, as expected for a solution with square-root singularity at the crack tips. The squared energy error decays like $O(\Delta x^{1/2})$, in agreement with the heuristic error indicator. 
In Figure \ref{Example4_2} we show  the decay of the energy error in terms of $DoF\!s$  for adaptive space-time refinements driven by the  theoretical, respectively heuristic, error indicators. It is worth noting that, as in Examples \ref{example1} {and \ref{example2}}, the slopes for both energy errors
are similar and follow the behavior of the heuristic 
indicator, while the theoretical 
indicator decays faster, as explained in Example \ref{example1}.

Figure \ref{Example4_3}  presents meshes with similar numbers of $DoF\!s$, obtained using the space-time adaptive algorithm driven respectively by theoretical and heuristic error indicators: both show similar refinements in accordance with the expectations.

\begin{minipage}{13.cm}
\hspace{2.cm}
\includegraphics[trim=40 0 0 0, clip,scale=0.36]{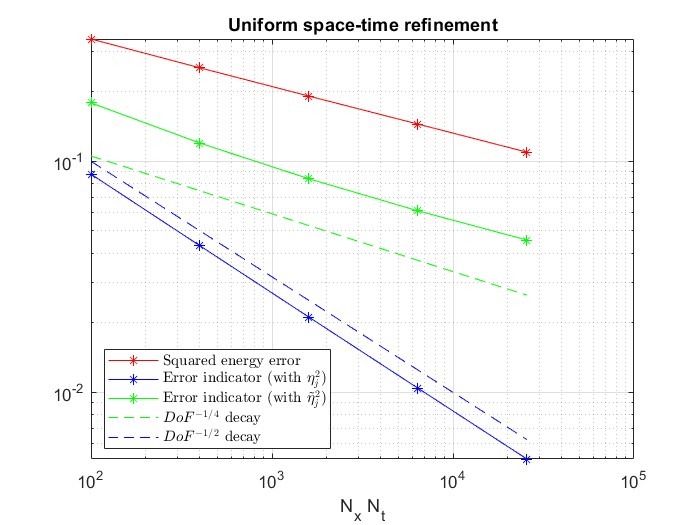}
\vspace{-0.2cm}\captionof{figure}{Example \ref{example4}: decay of squared $L^2$  and energy errors w.r.t.~degrees of freedom $DoF\!s=N_xN_t$.}\vspace{-0.2cm}
\label{Example4_1}
\end{minipage}
\begin{minipage}{13.cm}
\hspace{2.cm}
\includegraphics[trim=40 0 0 0, clip,scale=0.36]{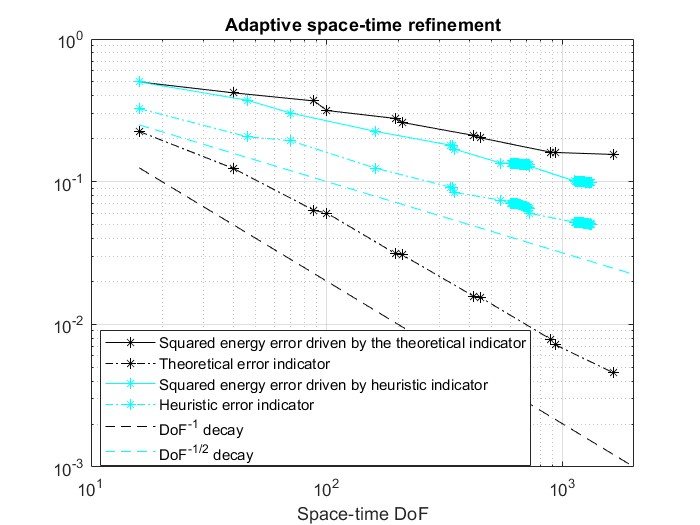}
\captionof{figure}{Example \ref{example4}: decay of the energy errors w.r.t.~the space-time $DoF\!s$, driven respectively by theoretical and heuristic error indicators.}\label{Example4_2}
\end{minipage}
\begin{minipage}{13.cm}
\hspace{-.2cm}\includegraphics[trim=0 0 0 0, clip,scale=0.45]{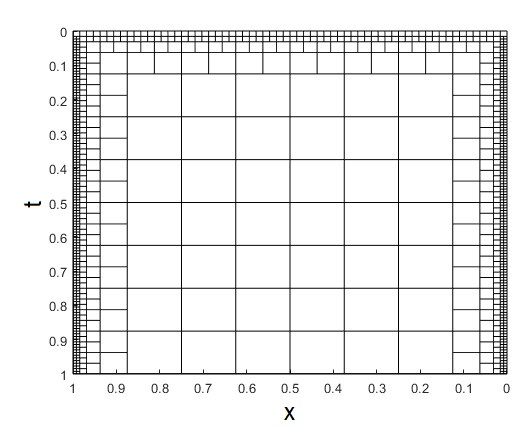}\hspace{0.5cm}
\includegraphics[trim=20 0 0 0, clip,scale=0.45]{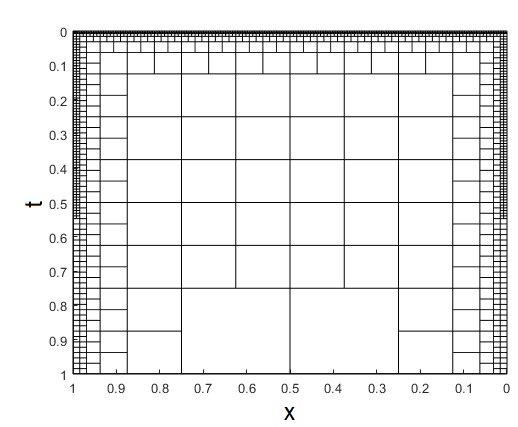}
\captionof{figure}{Example \ref{example4}: last mesh of the adaptive algorithm driven by theoretical error indicator (left), last mesh of the adaptive algorithm driven by heuristic error indicator (right)}
\label{Example4_3}
\end{minipage}

\vspace*{0.4cm}

Finally, Figure \ref{Example4_4} (left) shows a comparison between the squared energy errors obtained by the uniform and adaptive refinements: the slopes for the adaptive approaches, in line with $O(DoF^{-1/2})$, are about twice those 
for the uniform one. This benefit is also reflected in the memory usage, see Figure \ref{Example4_4} (right). Despite the overhead for non-product meshes, memory grows more slowly with accuracy in the adaptive cases and memory savings are seen at higher accuracies for both indicators.

\section{Conclusions}

In this paper we have introduced a space-time adaptive boundary element method for acoustic soft-scattering problems, which are formulated as a weakly singular boundary integral equation.\\ 
The adaptive mesh refinements are steered by error indicators  based on the a posteriori error estimates of residual type in \cite{graded} for the $H^{0}_\sigma(\mathbb{R}^+,H^{-\frac{1}{2}}(\Gamma))$ error, respectively a heuristic modification. Compared to standard implementations for tensor product discretizations of the space-time cylinder $[0,T]\times\Gamma$, we have outlined algorithmic aspects including the efficient assembly of the Galerkin matrix for local tensor products, 
\begin{minipage}{13.cm}
\hspace{-0.6cm}
$\begin{array}{rl}\hspace{-0.2cm}
\begin{minipage}{6.4cm}
\includegraphics[trim=0 0 0 0, clip,scale=0.37]{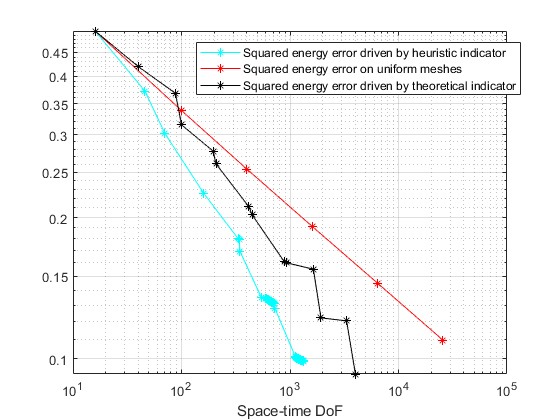}
\end{minipage}
&\hspace{0.2cm}
\begin{minipage}{6.4cm}
\includegraphics[trim=25 0 44 0, clip,scale=0.37]{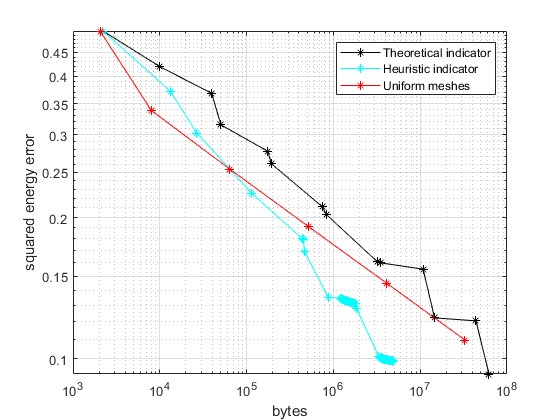}
\end{minipage}
\end{array}$
\captionof{figure}{Example \ref{example4}: comparison between the squared energy errors for uniform and adaptive refinements w.r.t.~$DoF\!s$ (left) and w.r.t.~memory consumption (right).}
\label{Example4_4}
\end{minipage}

\vspace*{0.4cm}

its update after mesh refinements, as well as the computation of error indicators.\\
The space-time adaptive algorithm has been studied in numerical experiments for wave scattering problems in $\mathbb{R}^2$ exhibiting a wide range of solutions with singularities in space, in time or in space-time. Numerical results show savings in $DoF\!s$ and in memory, and that the heuristic error indicator converges at the same rate as the energy error, suggesting its efficiency and reliability. Further, they confirm that the theoretical error indicator estimates a weaker norm than the energy norm, as expected from \cite{graded}, consistent with the expected norm of $H^{0}_\sigma(\mathbb{R}^+,H^{-\frac{1}{2}}(\Gamma))$, and for uniform refinements the theoretical error indicator leads to the expected convergence rates in all the experiments.\\
In the case of solutions with power-law singularities the obtained convergence rates on adaptively generated meshes are approximately twice of those obtained on uniform meshes. Like for time-independent problems, higher convergence rates can be achieved on (non-shape regular) graded meshes, but the proposed adaptive algorithm is limited by its shape-regular refinements. Anisotropic space-time refinements may therefore be relevant, but their theoretical basis remains widely open.\\
This work suggests the promise of an efficient space-time adaptive procedure for wave equations in $\mathbb{R}^3$, where larger savings in $DoF\!s$ and in memory are expected. 

{Beyond the model problem for the weakly singular integral equation addressed in the current work, future directions of interest include its extension to $\mathbb{R}^3$, to anisotropic mesh refinements and to $hp$ methods based on higher-order elements, as well as to nonlinear problems with complex, nonsmooth solutions \cite{aimi2023time,gimperlein2019space}. }\\


\textbf{Acknowledgments}: The authors wish to thank the Centre International de Rencontres Mathématiques (CIRM) in Marseille, France, for support of the research program \emph{Space-time adaptive boundary element methods for wave equations}. 

\bibliographystyle{siamplain}
\bibliography{Biblio}
\end{document}